\newtheorem{thrm}{Theorem}[section]
\newtheorem{lem}[thrm]{Lemma}
\theoremstyle{definition}
\newtheorem{definition}[thrm]{Definition}
\newtheorem{remark}[thrm]{Remark}
\numberwithin{equation}{section}
\newcommand{\R}{\mathbb R}
\newcommand{\N}{\mathbb N}
\newcommand{\Z}{\mathbb Z}
\newcommand{\p}{\partial}
\newcommand{\backs}{\backslash}
\newcommand{\ds}{\displaystyle}
\author{Li-An Daniel Wang}
\address{
Department of Mathematics and Statistics \\
Sam Houston State University \\
Huntsville, TX }
\email{daniel.wang@shsu.edu}
\thanks{
The author was supported by the departmental funds from Sam Houston State University and the Johnson Fellowship from the University of Oregon. We would also like to thank M. Bownik for his patient guidance in this project. 
}
\keywords{Anisotropic Hardy spaces, Fourier transform, multipler theorem}
\subjclass[2010]{42B30 (42B25, 42B35)}
\begin{document}

\title[Multiplier Theorem on $H_A^p$]{A Multiplier Theorem on Anisotropic Hardy Spaces}

\begin{abstract} We present a multiplier theorem on anisotropic Hardy spaces. When $m$ satisfies the anisotropic, pointwise Mihlin condition, we obtain boundedness of the multiplier operator $T_m : H_A^p (\R^n) \rightarrow H_A^p (\R^n)$, for the range of $p$ that depends on the eccentricities of the dilation $A$ and the level of regularity of a multiplier symbol $m$. This extends the classical multiplier theorem of Taibleson and Weiss \cite{MR604370}.

\end{abstract}
\maketitle

\section{Introduction} \label{sect1}

We present a multiplier theorem (Theorem \ref{Thm-Mult1}) on anisotropic Hardy space $H_A^p (\R^n)$. This space was first studied by Bownik \cite{MR1982689}, and generalizes the classical Hardy space of Fefferman and Stein \cite{MR0447953} as well as the parabolic Hardy spaces of Calder\'{o}n and Torchinsky \cite{MR0417687} with a geometry and quasinorm induced by an expansive matrix $A$. Since the introduction of $H_A^p$, the anisotropic structure has been extended to a number of settings: Besov \cite{MR2179611} and Triebel-Lizorkin spaces \cite{MR2186983},  weighted anisotropic Hardy spaces \cite{MR2492226}, variable Hardy-Lorentz spaces \cite{ABR-2016}, and pointwise variable anisotropy \cite{MR2838119}, to name just a few. However, the study of the Fourier transform on these further generalizations are still incomplete, given that analysis of the Fourier transform becomes substantially harder. 

To state our multiplier theorem, we require a few definitions; more details are in Section 2.  Let $A$ be an $n \times n$ matrix, and $|\det A| = b$. We say $A$ is a dilation matrix if all eigenvalues $\lambda$ of $A$ satisfy $|\lambda| > 1$. If $\lambda_1, \ldots, \lambda_n$ are the eigenvalues of $A$, ordered by their norm from smallest to largest, then define $\lambda_-$ and $\lambda_+$ to satisfy $1 < \lambda_- < |\lambda_1|$ and $|\lambda_n| < \lambda_+$. Associated with $A$ is a sequence of nested ellipsoids $\{ B_j \}_{j \in \Z}$ such that $B_{j + 1} = A(B_j)$ and $|B_0| = 1$. If $A^{*}$ is the adjoint of $A$, then $A^{*}$ is also a dilation matrix with the same determinant $b$ and eigenvalues, with its own nested ellipsoids $\{ B_k^{\ast} \}_{k \in \Z}$.

We use $\hat{f}$ and $\check{f}$ to denote the Fourier and inverse Fourier transforms of $f$ respectively. We say a measurable function $m \in L^{\infty}$ is a multiplier on $H_A^p$ if its associated multiplier operator, initially defined by $T_m f = (\hat{f} m)^{\vee}$ for $f \in L^2 \cap H_A^p$, is bounded $H_A^p \rightarrow H_A^p$. We reserve $\xi$ for the independent variable in the frequency domain, and $\p_{\xi}$ denotes differentiation with respect to $\xi$. For a dilation matrix $A$, we define the dilation operator by $D_A f (x) = f(Ax)$. Henceforth, $C$ will denote a general constant which may depend on the dilation matrix $A$ and any scalar parameters $n, p, q$, and may change from line to line, but independent of $f \in H_A^p$. The regularity requirement of a multiplier $m$ will be given by the following Mihlin condition.

\begin{definition} Let $A$ be a dilation matrix. Let $N \in \N \cup \{ 0 \}$ and let $m \in C^N (\R^n \backs \{ 0 \})$. We say $m$ satisfies the anisotropic Mihlin condition of order $N$ if there exists a constant $C = C_N$ such that for all multi-indices $\beta$,  $|\beta| \leq N$, all $j \in \Z$, and all $\xi \in B_{j + 1}^{\ast} \backs B_j^{\ast}$,
    \begin{align}\label{Def-M}
    |D_{A^{\ast}}^{-j} \p_{\xi}^{\beta} D_{A^{\ast}}^j m(\xi)| \leq C.
    \end{align}
\end{definition}

We can now state our main result. For $r \in \R$, the (integer) floor of $r$ is given by $\lfloor r \rfloor$. 
    \begin{thrm}\label{Thm-Mult1} Let $A$ be a dilation matrix, $N \in \N$, and denote $L = \left(N\frac{\log \lambda_-}{\log b} - 1 \right) \frac{\log b}{\log \lambda_+}$. If $m$ satisfies the Mihlin condition of order $N$ and $T_m$ is the corresponding multiplier operator, then $T_m : H_A^p \rightarrow H_A^p$ is bounded provided $p$ satisfies
        \begin{align}\label{range:multiplier}
        0 \leq \frac{1}{p} - 1 < \left\lfloor L \right\rfloor \frac{(\log \lambda_-)^2}{\log b \log \lambda_+}.
        \end{align}
    \end{thrm}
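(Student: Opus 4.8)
The plan is to reduce the problem to a uniform atomic estimate: it suffices to show that $T_m$ maps each $(p,\infty)$-atom (or $(p,2,s)$-atom with $s = \lfloor \log b/\log\lambda_- \rfloor\cdot$ (appropriate vanishing moments)) to a fixed multiple of a molecule, or more robustly, to a function with a uniformly controlled nontangential maximal function in $L^p$. Since finite linear combinations of atoms are dense in $H_A^p$ and $T_m$ is $L^2$-bounded (because $m\in L^\infty$), a standard argument (as in Bownik \cite{MR1982689}, or the Taibleson--Weiss scheme \cite{MR604370}) reduces everything to proving $\|T_m a\|_{H_A^p}\le C$ for an atom $a$ supported on a dilated ellipsoid $x_0 + B_{j_0}$, with $C$ independent of the atom. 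By translation and dilation invariance of the hypotheses one normalizes $x_0 = 0$ and $j_0 = 0$.

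Next I would decompose the multiplier dyadically in the anisotropic sense. Write $m = \sum_{k\in\Z} m_k$, where $m_k = m\,\psi_k$ and $\{\psi_k\}$ is a smooth partition of unity adapted to the rings $B_{k+1}^*\setminus B_{k-1}^*$, i.e. $\psi_k(\xi) = \psi_0(A^{*-k}\xi)$. The anisotropic Mihlin condition \eqref{Def-M} is exactly the scaling-invariant statement that each $D_{A^*}^{-k}\partial_\xi^\beta D_{A^*}^k m_k$ is bounded by $C$ uniformly in $k$, for $|\beta|\le N$. This lets me control the kernel $K_k = (m_k)^\vee$: by integrating by parts $N$ times against the vector fields dual to the $A$-dilation structure, I expect a bound of the shape
\begin{align*}
|K_k(x)| \le C\, b^{k}\, \bigl(1 + \rho_{A}(A^{k}x)\bigr)^{-\gamma},
\end{align*}
where $\rho_A$ is the step homogeneous quasi-norm associated with $A$ and $\gamma$ is a decay exponent governed by $N$ and the eccentricities. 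Here the subtlety is that the anisotropic integration by parts trades one unit of smoothness not for one unit of decay in $|x|$ but for decay measured in the $\rho_A$-metric, and converting between the two costs a factor involving $\log\lambda_+/\log\lambda_-$; this is precisely where the exponent $L = (N\log\lambda_-/\log b - 1)\log b/\log\lambda_+$ and its floor enter. One then estimates the moments $\int x^\alpha K_k(x)\,dx$ and the tail behavior of $T_m a = \sum_k K_k * a$, splitting the sum at $k = 0$ (comparable scales) versus $k > 0$ and $k < 0$.

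The core quantitative step is the molecular/atomic estimate for $T_m a$: one shows $T_m a$ is (a multiple of) a $p$-molecule centered at $0$, using (i) the $L^2$ bound from $m\in L^\infty$ together with the size of the atom, (ii) the decay of $\sum_k K_k$ away from $B_0$ coming from the kernel estimate above, and (iii) the vanishing moments of $a$ up to order $s$, which against the smoothness of $m_k$ produce extra decay. Summing the geometric-type series in $k$ is what forces $1/p - 1$ to stay below $\lfloor L\rfloor (\log\lambda_-)^2/(\log b\log\lambda_+)$: the number of usable vanishing moments, after the anisotropic conversion, is effectively $\lfloor L\rfloor$ rescaled by $(\log\lambda_-)^2/(\log b\log\lambda_+)$, and the molecular decay must beat the $p$-summability threshold. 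Finally one invokes the anisotropic molecule-into-$H_A^p$ theorem (the analogue of the classical molecule lemma, available from the theory in \cite{MR1982689}) to conclude $\|T_m a\|_{H_A^p}\le C$, and then density finishes the proof.

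I expect the main obstacle to be step two — getting the \emph{sharp} pointwise kernel decay with the correct exponent out of the anisotropic Mihlin condition. Ordinary Mihlin multiplier theory hands you decay in the Euclidean metric, but here the derivatives in \eqref{Def-M} are the ``wrong'' derivatives from the Euclidean viewpoint (they are conjugated by $D_{A^*}^k$), so one must carefully track how many genuine derivatives in which directions one actually controls, and translate a gain of $N$ anisotropic derivatives into $\rho_A$-decay of order essentially $N\log\lambda_-/\log b$, then back into moment cancellation of order $\lfloor L\rfloor$. Bookkeeping the eccentricity losses at each conversion, uniformly in $k$, and making sure none of the constants blow up as $k\to\pm\infty$, is the delicate part; everything after that is a (careful but routine) molecular summation.
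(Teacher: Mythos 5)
Your outline tracks the paper's architecture at a high level: anisotropic dyadic decomposition of $m$, pointwise kernel estimates obtained by trading Mihlin derivatives for $\rho$-decay, mapping atoms to molecules, and then summing the molecular norms. You also correctly identify that the floor $\lfloor L\rfloor$ enters because of the eccentricity losses in converting Euclidean smoothness to anisotropic decay. However, there is one step you treat as routine which the paper explicitly flags as the opposite. You write that ``a standard argument (as in Bownik or Taibleson--Weiss) reduces everything to proving $\|T_m a\|_{H_A^p}\le C$ \dots and then density finishes the proof.'' The paper points out (citing \cite{MR2163588}) that uniform bounds on $(p,\infty,s)$-atoms do \emph{not} in general suffice for a sublinear operator, and that even the $(p,2,s)$-atom route via \cite{MR2399059} only yields a bounded extension $\tilde T$ of $T$ from $H_{fin}^{p,2}$, with no a priori guarantee that $\tilde T$ agrees with the originally defined operator $T_m f = (m\hat f)^\vee$ on the rest of $L^2\cap H_A^p$. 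Lemma~\ref{lem:UB-atom} is precisely the content the paper supplies to close this gap, using Theorem~1 of \cite{MR3043011} (continuity and pointwise decay of $\hat f$ for $f\in H_A^p$) to show that the specific operator $T_m$ passes through an infinite atomic sum, so that the extension genuinely is $T_m$. Without some substitute for this, the sentence ``density finishes the proof'' is a genuine gap.

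Two lesser points. First, the pointwise decay $|K_k(x)|\le C\,b^k(1+\rho_A(A^kx))^{-\gamma}$ you posit is in the spirit of a H\"ormander/molecular kernel bound, whereas the paper proves the stronger differentiated anisotropic Calder\'on--Zygmund condition $|D_A^{-k}\partial_x^\alpha D_A^k K(x)|\le C/\rho(x)$ up to order $R$ (Lemma~\ref{lem:DW}), and then invokes the off-support estimate for $Ta$ from Lemma~9.5 of \cite{MR1982689}; either can work, but your exponent $\gamma$ is left unspecified and would need to be pinned down precisely (as a function of $R$ and $\zeta_-$) to recover the range \eqref{range:multiplier}, and in particular to see why it is $\lfloor L\rfloor$ rather than some other integer near $L$. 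Second, you do not address the boundary case $L\in\N$, which the paper handles by the eccentricity-tightening device of Remark~\ref{def:tighten} (replacing $\lambda_\pm$ by $\tilde\lambda_\pm$ so that $\tilde L\notin\N$ while $\lfloor\tilde L\rfloor=\lfloor L\rfloor$); without this the strict inequality in \eqref{Lem-R} would force you to lose one derivative when $L$ is an integer.
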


\begin{remark}\label{def:tighten} We have implicitly fixed $\lambda_-$ and $\lambda_+$, determining the eccentricities of our dilation matrix. However, we can always `tighten' the eccentricities by defining $\tilde{\lambda}_-$ and $\tilde{\lambda}_+$ so that
	\[ 1 < \lambda_- < \tilde{\lambda}_- < |\lambda_1| \leq \ldots \leq |\lambda_n| < \tilde{\lambda}_+ < \lambda_+. \] 
In the proof of Theorem \ref{Thm-Mult1}, we will exploit this simple fact.

\end{remark}

\begin{remark} An instructive example for the dilation matrix is by setting $A = 2I_n$, so $\lambda_- = \lambda_+ = 2$ and $b = 2^n$. Then \eqref{range:multiplier} is equivalent to $\frac{n}{N - 1} < p \leq 1$, thus recovering the classical case. 

\end{remark}


As an essential class of singular integral operators, multiplier operators have been well studied for the classical Hardy space $H^p$ and its various extensions. We briefly discuss four classical multiplier theorems that are related to Theorem \ref{Thm-Mult1}.

First, our proof of Theorem \ref{Thm-Mult1} most closely resembles that of Peetre \cite{MR0380394} in that if $m$ satisfies a classical pointwise Mihlin condition with respect to the Euclidean norm (which condition \eqref{Def-M} generalizes), then it is a multiplier on Triebel-Lizorkin and Besov-Lipschitz spaces. Second, this pointwise Mihlin condition is stronger than an integral H\"{o}rmander condition on $m$, used in Taibleson and Weiss \cite{MR604370} and paired with molecular decomposition of $H^p$ to prove the boundedness of $T_m$. 
Third, this H\"{o}rmander condition is equivalent to a Herz-norm condition on the inverse-Fourier transforms of smooth truncations of the multiplier $m$, which Calder\'{o}n and Torchinsky \cite{MR0417687} used to prove the multiplier theorem in the parabolic setting. 
Lastly, Baernstein and Sawyer \cite{MR776176} further generalized this with a weaker Herz-norm condition, generalizing the previous three multiplier theorems. 

For our multiplier theorem, we will assume the (strongest) Mihlin condition on $m$ to overcome the issues native to the anisotropic setting. This approach was first considered by Benyi and Bownik \cite{MR2720206} in the study of symbols associated with pseudo-differential operators. Our Theorem \ref{Thm-Mult1} is closely related to their result, though we require minimal regularity requirement on $m$, and we obtain a more precise range of exponents $p$ for which multiplier operators are bounded in terms of eccentricity of the dilation $A$, as measured by $\frac{\log \lambda_-}{\log \lambda_+}$ and $\frac{\log_{\lambda_-}}{\log b}$. Ding and Lan \cite{MR2316761} extended the multiplier theorems of \cite{MR776176} to the spaces $T : H_A^p \rightarrow L^p$, though with an additional requirement that the dilation $A$ is symmetric.

The rest of the paper will be organized as follows. In Section 2, we give the background information on anisotropic Hardy spaces $H_A^p$. In Section 3, we give the lemmas needed for the proof of Theorem \ref{Thm-Mult1}, from which the theorem follows immediately. In Section 4, we provide the proofs of the lemmas as well as the molecular decomposition of $H_A^p$.

\section{Anisotropic Hardy spaces and Multiplier Operators} We now introduce the anisotropic structure and the associated Hardy spaces. Given a dilation matrix $A$, we can find a (non-unique) homogeneous quasi-norm, that is, a measurable mapping $\rho_A : \R^n \rightarrow [0, \infty)$ with a doubling constant $c$ satisfying:
        \begin{center}
        \begin{tabular}{lll}
        $\rho_A (x) = 0$
            &   exactly when    &   $x = 0$, \\
        $\rho_A (Ax) = b\rho(x)$
            &   for all         &   $x \in \R^n$, \\
        $\rho_A(x + y) \leq c(\rho_A(x) + \rho_A(y))$  &  for all & $x, y \in \R^n$. \\
        \end{tabular}
        \end{center}
Note that $(\R^n, dx, \rho_A)$ is a space of homogeneous type ($dx$ denotes the Lebesgue measure), and any two quasi-norms associated with $A$ will give the same anisotropic structure. In the isotropic setting, the `basic' geometric object is the Euclidean ball $B(x, r)$, centered at $x \in \R^n$ with radius $r$. This has the nice property that whenever $r_1 < r_2$, we have $B(x, r_1) \subset B(x, r_2)$. But for a dilation matrix $A$, we do not expect $B(x, r) \subset A(B(x, r))$. Instead, one can construct `canonical' ellipsoids $\{ B_k \}_{k \in \Z}$, associated with $A$, such that for all $k$, $B_{k + 1} = A(B_k)$, $B_k \subseteq B_{k + 1}$,  and $|B_k| = b^k$. These nested ellipsoids will serve as the basic geometric object in the anisotropic setting. Moreover, we can use the ellipsoids to define the canonical quasinorm associated with $A$ as follows:
    \begin{align*}
    \rho_A(x) =
            \begin{cases}
            b^j     &\textrm{ if } x \in B_{j + 1} \backslash B_j \\
            0       &\textrm{ if } x = 0.
            \end{cases}
    \end{align*}

By setting $\omega$ to be the smallest integer so that $2B_0 \subset A^{\omega} B_0 = B_{\omega}$, $\rho_A$ is a quasinorm with the doubling constant $c = b^{\omega}$. Once $A$ is fixed, we will drop the subscript and $\rho$ will always denote the step norm. The anisotropic quasi-norm is related to the Euclidean structure by the following lemma of Lemarie-Rieusset \cite{MR1286477}.

\begin{lem} Suppose $\rho_A$ is a homogeneous quasi-norm associated with dilation $A$. Then there is a constant $c_A$ such that:
    \begin{equation}
    \label{EQ1}
    \begin{aligned}
    \frac{1}{c_A} \rho_A(x)^{\zeta_-} \leq |x| \leq c_A \rho_A(x)^{\zeta_+}     \ &\textrm{ if } \ \rho_A(x) \geq 1, \\
    \frac{1}{c_A} \rho_A(x)^{\zeta_+} \leq |x| \leq c_A \rho_A(x)^{\zeta_-}     \ &\textrm{ if } \ \rho_A(x) < 1.
    \end{aligned}
    \end{equation}
where $c_A$ depends only on the eccentricities of $A$:  $\displaystyle \zeta_{\pm} = \frac{\ln \lambda_{\pm}}{\ln b}$.
\end{lem}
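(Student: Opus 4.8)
The plan is to reduce everything to the canonical step quasi-norm $\rho$ associated with the ellipsoids $\{B_j\}_{j\in\Z}$. Since, as noted above, any two homogeneous quasi-norms associated with $A$ are comparable, and since replacing $\rho_A$ by a constant multiple only changes the constant $c_A$, it suffices to prove \eqref{EQ1} for $\rho$. For $\rho$ the strategy is to convert the dilation relation $\rho(Ax)=b\rho(x)$ into the eigenvalue-governed growth and decay of the iterates $A^k$.

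The one non-routine ingredient I would isolate first is the elementary operator estimate: there is a constant $c\ge 1$, depending only on $A$, $\lambda_-$, $\lambda_+$, such that for every $x\in\R^n$,
\begin{align*}
c^{-1}\lambda_-^{\,k}\,|x|\ \le\ |A^k x|\ \le\ c\,\lambda_+^{\,k}\,|x|\quad (k\ge 0),\qquad c^{-1}\lambda_+^{\,k}\,|x|\ \le\ |A^k x|\ \le\ c\,\lambda_-^{\,k}\,|x|\quad (k< 0).
\end{align*}
To obtain the upper bound for $k\ge 0$, pass to a Jordan decomposition $A=PJP^{-1}$; the entries of $J^k$ are bounded by $Ck^{\,n}|\lambda_n|^k$, so $\|A^k\|\le Ck^{\,n}|\lambda_n|^k\le c\,\lambda_+^k$, since $k^{\,n}(|\lambda_n|/\lambda_+)^k$ is bounded for $k\ge 0$ by $\lambda_+>|\lambda_n|$. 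The lower bound for $k\ge 0$ follows by applying the upper bound to $A^{-1}$, whose spectral radius is $|\lambda_1|^{-1}<\lambda_-^{-1}$, yielding $\|A^{-k}\|\le c\,\lambda_-^{-k}$ and hence $|x|=|A^{-k}A^k x|\le c\,\lambda_-^{-k}|A^k x|$. The two estimates for $k<0$ are these same statements applied to $A^{-1}$ in place of $A$.

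Granting the operator estimate, the lemma follows by a short computation. For $x\ne 0$ let $j\in\Z$ be the unique integer with $\rho(x)=b^j$, i.e. $x\in B_{j+1}\setminus B_j$. Then $y:=A^{-j}x\in B_1\setminus B_0$, so $c_0^{-1}\le |y|\le c_0$ for a constant $c_0=c_0(A)$, using that $B_1$ is bounded and $0$ is interior to $B_0$. Writing $x=A^j y$, applying the operator estimate with $k=j$, and using the identity $\lambda_\pm^{\,j}=b^{\,j\log_b\lambda_\pm}=\rho(x)^{\zeta_\pm}$, we get
\[
c^{-1}c_0^{-1}\,\rho(x)^{\zeta_-}\ \le\ |x|\ \le\ c\,c_0\,\rho(x)^{\zeta_+}\qquad\text{if } j\ge 0\ (\text{i.e. }\rho(x)\ge 1),
\]
\[
c^{-1}c_0^{-1}\,\rho(x)^{\zeta_+}\ \le\ |x|\ \le\ c\,c_0\,\rho(x)^{\zeta_-}\qquad\text{if } j< 0\ (\text{i.e. }\rho(x)< 1),
\]
which is \eqref{EQ1} with $c_A=c\,c_0$. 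The only delicate point in the argument is the operator estimate — specifically the observation that the polynomial Jordan-block factors in $\|A^k\|$ are harmless precisely because $\lambda_-$ lies strictly below and $\lambda_+$ strictly above the spectrum of $A$, which is exactly where the hypotheses $1<\lambda_-<|\lambda_1|$ and $|\lambda_n|<\lambda_+$ are used.
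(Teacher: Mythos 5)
Your proof is correct. Note that the paper does not actually supply a proof of this lemma: it is stated as a citation to Lemari\'e--Rieusset \cite{MR1286477} (and it also appears, with essentially the same argument you give, in Bownik's monograph \cite{MR1982689}). Your route is the standard one: first establish the two-sided operator estimate $c^{-1}\lambda_-^{k}|x|\le|A^kx|\le c\,\lambda_+^{k}|x|$ for $k\ge 0$ (and the mirrored estimate for $k<0$) from the Jordan form, using that $\lambda_-$ and $\lambda_+$ sit strictly below and above the spectrum so the polynomial Jordan factors are absorbed, and then transfer this to the step quasi-norm by writing $x=A^{j}y$ with $y$ in the bounded annulus $B_1\setminus B_0$ and $\rho(x)=b^j$. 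The reduction from a general homogeneous quasi-norm $\rho_A$ to the canonical step norm $\rho$ is legitimate since any two such quasi-norms are equivalent up to multiplicative constants (a fact the paper asserts and which is proved in \cite{MR1982689}), and this only alters $c_A$. One minor point worth stating explicitly is that the Jordan-block bound is $\|J^k\|\le C\,k^{\,n-1}|\lambda_n|^k$ rather than $k^n$, but this does not affect the conclusion since either polynomial factor is dominated by $(\lambda_+/|\lambda_n|)^k$.
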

Lastly, we observe that if $A^{*}$ is the adjoint of $A$, then $A^{*}$ is also a dilation matrix with its own (canonical) norm $\rho_{\ast}$, though $A^*$ and $A$ have the same eigenvalues and eccentricities.

We denote $\mathcal{S}$ as the Schwartz, and $\mathcal{S}'$ the space of tempered distributions. Suppose we fix $\varphi \in \mathcal{S}$ such that $\int \varphi \ dx \neq 0$. If $k \in \Z$, we denote the anisotropic dilation by $\varphi_k (x) = b^{k} \varphi(A^{k} x)$. Then the radial maximal function on $f \in \mathcal{S}'$ is given by
    \[ M_{\varphi}^0 f(x) = \sup_{k \in \Z} |f \ast \varphi_k (x)|. \]
The anisotropic Hardy space $H_A^p$ consists of all tempered distributions $f \in \mathcal{S}'$ so that $M_{\varphi} f \in L^p$, with $\| f \|_{H_A^p} \simeq \| M_{\varphi} f \|_{L^p}$. Analogous to the isotropic setting, this definition is independent of the choice of $\varphi$ and is equivalent to the grand maximal function formulation (see \cite[Theorem 7.1]{MR1982689}).

We now present the atomic and molecular decompositions of $H_A^p$, which greatly simplifies the analysis of Hardy spaces. For a fixed dilation $A$, we say $(p, q, s)$ is an admissible triple if $p \in (0, 1]$, $1 \leq q \leq \infty$ with $p < q$, and $s \in \N$ satisfies $s \geq \left\lfloor \left( \frac{1}{p} - 1 \right) \frac{\ln b}{\ln \lambda_-} \right\rfloor$. For the rest of this article, $(p, q, s)$ will always denote an admissible triple. A $(p, q, s)$ atom is a function $a$ supported on $x_0 + B_j$ for some $x_0 \in \R^n$ and $j \in \Z$, satisfying size condition $\| a \|_q \leq |B_j|^{\frac{1}{q} - \frac{1}{p}}$, and vanishing moments condition: For $|\alpha| \leq s$, 
	\[\ds \int_{\R^n} a(x) x^{\alpha} dx = 0. \]
The following theorem is the atomic decomposition of $H_A^p$, see \cite[Theorem 6.5]{MR1982689}:
    \begin{thrm}\label{thm:atom} Suppose $p \in (0, 1]$ and $(p, q, s)$ is admissible. Then $f \in H_A^p(\R^n)$ if and only if
        \[ f = \sum_{i} \lambda_i a_i, \]
    for some sequence $( \lambda_i )_i \in \ell^p$ and $( a_i )$ a sequence of $(p, q, s)$ atoms. Moreover,
        \[ \| f \|_{H_A^p} \simeq \inf \{ \| (\lambda_i) \|_{\ell^p}: f = \sum_{i} \lambda_i a_i \}, \]
    where the infimum is taken over all possible atomic decompositions.
    \end{thrm}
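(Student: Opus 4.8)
\emph{Overall plan.} Since this is the standard-form atomic decomposition (it is \cite[Theorem 6.5]{MR1982689}; for $A=2I_n$ it is the classical Fefferman--Stein / Latter decomposition), I would prove the two implications by the Calder\'on--Zygmund machinery adapted to the dilation $A$: the ``if'' direction reduces to a uniform estimate of the maximal function of a single atom, and the ``only if'' direction is a Whitney--Calder\'on--Zygmund decomposition of $f$ at dyadic heights of its maximal function.

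\emph{Atoms give $H_A^p$ functions.} As $p\le 1$, the functional $g\mapsto\|g\|_{H_A^p}^p$ is subadditive, so I would first bound $\|a\|_{H_A^p}\le C$ uniformly over all $(p,q,s)$ atoms $a$ (the general bound then follows by passing the sum through $M_\varphi^0$ via an $L^p$-quasi-norm limiting argument). Using translation- and $A$-dilation-invariance of the problem, assume $\supp a\subset B_0$. On a fixed dilate $B_{\omega_0}$ I would estimate $\|M_\varphi^0 a\|_{L^p(B_{\omega_0})}$ by H\"older's inequality, the $L^q$- (or, for $q=1$, weak-$(1,1)$-) boundedness of the maximal operator, and the size condition $\|a\|_q\le|B_0|^{1/q-1/p}$. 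Off $B_{\omega_0}$ I would use the $s$ vanishing moments: expanding $\varphi_k(x-\cdot)$ in a Taylor polynomial of degree $s$ around $0$, Taylor's theorem, the Schwartz decay of $\varphi$, and the Lemari\'e--Rieusset estimate \eqref{EQ1} give $M_\varphi^0 a(x)\le C\rho(x)^{-1-(s+1)\zeta_-}$ for $\rho(x)\ge 1$; the admissibility condition $s\ge\lfloor(\tfrac1p-1)\tfrac{\ln b}{\ln\lambda_-}\rfloor$ is exactly what forces $(1+(s+1)\zeta_-)p>1$, making $\rho(x)^{-(1+(s+1)\zeta_-)p}$ integrable over $\{\rho\ge1\}$, whence $\|M_\varphi^0 a\|_{L^p(B_{\omega_0}^c)}\le C$.

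\emph{$H_A^p$ functions decompose into atoms.} For $f\in H_A^p$ I would work with the grand maximal function $\M_N f$ (equivalent to $M_\varphi^0 f$ in $L^p$). Put $\Omega_k=\{\M_N f>2^k\}$; these are open, $|\Omega_k|\le C2^{-kp}\|f\|_{H_A^p}^p$, and the distribution-function identity gives $\sum_k 2^{kp}|\Omega_k|\simeq\|f\|_{H_A^p}^p$. I would take an anisotropic Whitney covering of each $\Omega_k$ by ellipsoids $x_i^k+B_{\ell_i^k}$ of bounded overlap whose controlled dilates stay in $\Omega_k$, a subordinate partition of unity $\{\zeta_i^k\}$, and form the Calder\'on--Zygmund decomposition $f=g^k+\sum_i b_i^k$ at height $2^k$ with $b_i^k=(f-P_i^k)\zeta_i^k$, where $P_i^k$ is the $L^2(\zeta_i^k\,dx)$-projection of $f$ onto polynomials of degree $\le s$. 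The key steps are the pointwise bound $\|g^k\|_\infty\le C2^k$ and the convergences $g^k\to f$ (as $k\to\infty$), $g^k\to 0$ (as $k\to-\infty$) in $\mathcal S'$, giving $f=\sum_k(g^{k+1}-g^k)$ in $\mathcal S'$; one then writes $g^{k+1}-g^k=\sum_i h_i^k$ with each $h_i^k$ supported in a fixed dilate of $x_i^k+B_{\ell_i^k}$, vanishing through order $s$, and $\|h_i^k\|_q\le C2^k|B_{\ell_i^k}|^{1/q}$, so $h_i^k=\lambda_i^k a_i^k$ for a $(p,q,s)$ atom $a_i^k$ with $|\lambda_i^k|\le C2^k|B_{\ell_i^k}|^{1/p}$. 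Bounded overlap then yields
\[ \sum_{k,i}|\lambda_i^k|^p\le C\sum_k 2^{kp}\sum_i|B_{\ell_i^k}|\le C\sum_k 2^{kp}|\Omega_k|\le C\|f\|_{H_A^p}^p, \]
giving both the decomposition $f=\sum_{k,i}\lambda_i^k a_i^k$ and $\inf\|(\lambda_i)\|_{\ell^p}\le C\|f\|_{H_A^p}$; combined with the first part this proves the equivalence of quasi-norms.

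\emph{Main obstacle.} I expect the genuinely hard part to be the second implication — specifically, the estimates $\|g^k\|_\infty\le C2^k$ and $\|h_i^k\|_q\le C2^k|B_{\ell_i^k}|^{1/q}$. These reduce to showing that on a Whitney ellipsoid abutting $\Omega_k^c$ the distribution $f$ agrees with a degree-$\le s$ polynomial up to sup-norm $O(2^k)$, proved by testing $\M_N f$ at a nearby point of $\Omega_k^c$ against suitably normalized bump functions. Since the basic geometric objects are the $A$-ellipsoids rather than Euclidean balls, every Whitney, Taylor, and comparison estimate must be redone with $\rho$ and the quasi-triangle inequality (with its dilation-dependent constant $b^\omega$), and the balance between the order of vanishing moments needed for convergence and the admissibility threshold $s\ge\lfloor(\tfrac1p-1)\tfrac{\ln b}{\ln\lambda_-}\rfloor$ — mediated by \eqref{EQ1} — is the delicate point throughout.
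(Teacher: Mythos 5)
The paper does not prove this theorem; it is stated verbatim from \cite[Theorem~6.5]{MR1982689} and used throughout as a black box (most directly in the proofs of Lemma~\ref{lem:UB-atom} and Theorem~\ref{Thm:Molecular}). There is therefore no proof in the paper to compare against. What can be said is that your sketch correctly reproduces the strategy of the cited reference: the sufficiency direction by a uniform single-atom bound, split into a near-field estimate via $L^q$ (or weak-$(1,1)$) maximal-function control and a far-field estimate via vanishing moments, Taylor remainder, and the Lemari\'e--Rieusset comparison \eqref{EQ1}, with the admissibility threshold on $s$ being exactly the condition making the tail of $M_\varphi^0 a$ lie in $L^p$ (your computation that $(1+(s+1)\zeta_-)p>1$ is the right arithmetic); and the necessity direction by the anisotropic Calder\'on--Zygmund/Whitney decomposition at the dyadic level sets of the grand maximal function, projections onto $P_s$, the telescoping $f=\sum_k(g^{k+1}-g^k)$, and the $\ell^p$ summation via bounded overlap. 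Your identification of $\|g^k\|_\infty\le C2^k$ and the $L^q$/support control of the pieces $h_i^k$ as the technical crux is also accurate. In short, the plan is sound and matches the source the paper defers to, but it is an outline rather than a proof, and the paper itself supplies neither.
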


We can also decompose $f \in H_A^p$ with molecules, which generalize the notion of atoms. 

    \begin{definition}\label{Def:Molecules} Let $(p, q, s)$ be admissible, and fix $d$ satisfying
        \begin{align}
        \label{Mole-d}
        d > s \frac{\ln \lambda_+}{\ln b} + 1 - \frac{1}{q},
        \end{align}
    and define $\theta = (\frac{1}{p} - \frac{1}{q})/d$. Then we say a function $M$ is a $(p, q, s, d)$ molecule centered at $x_0 \in \R^n$ if it satisfies the following size and vanishing moments conditions:
        \begin{enumerate}
        \item $\ds N(M) = \| M \|_q^{1 - \theta} \ \| \rho(x - x_0)^d M \|_q^{\theta} < \infty$,
        \item $\ds \int x^{\beta} M(x) dx = 0$ for all $|\beta| \leq s$.
        \end{enumerate}
    \end{definition}
The quantity $N(M)$ is the molecular norm of $M$. We say the quadruple $(p, q, s, d)$ is admissible if the triple $(p, q, s)$ is an admissible triple and $d$ satisfies \eqref{Mole-d}. If we say $M$ is a molecule, then it implicitly has an admissible quadruple. A straightforward computation shows that if $a$ is an atom, then $N(a) \leq C$, where $C$ is a uniform constant.

The following theorem gives the molecular decomposition of Hardy spaces. It is not new, since the crucial ideas are implicit in Lemma 9.3 of \cite{MR1982689}, though our definition of molecules is more general than what is used there. For completeness, we will include the proof in the last section.
\begin{thrm}\label{Thm:Molecular} Every molecule $M$ is in $H_A^p$, and satisfies
    \begin{align}\label{est:Molecular}
    \| M \|_{H_A^p} \leq C N(M),
    \end{align}
where $C = C(A, p, q, s, d)$. Moreover, $f \in H_A^p(\R^n)$ if and only if there exist $(p, q, s, d)$ molecules $\{ M_j \}_j$ such that $f = \sum_j M_j$ in $\mathcal{S}'$, and $\sum_j N(M_j)^p < \infty$.
In this case, we have
    \[ \| f \|_{H_A^p}^p \leq C \sum_j N(M_j)^p. \]
\end{thrm}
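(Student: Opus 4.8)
The plan is to reduce everything to the atomic decomposition of Theorem~\ref{thm:atom}, via the classical Coifman--Weiss device of expanding a molecule as an $\ell^p$-summable sum of atoms, adapted to the dilation $A$. Once this is done for a single molecule, both halves of the statement follow quickly.

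The heart of the matter is the estimate $\|M\|_{H_A^p}\le C\,N(M)$ for one molecule, which by translation we may center at the origin. Choose an integer $k_0$ with $b^{k_0}\sim(\|\rho^d M\|_q/\|M\|_q)^{1/d}$, set $\tilde B_k=B_{k_0+k}$, $E_0=\tilde B_0$, $E_k=\tilde B_k\setminus\tilde B_{k-1}$ for $k\ge1$, and $M_k=M\chi_{E_k}$. For $k\ge1$ let $U_k$ be the element of the space $\mathcal P_s$ of polynomials of degree at most $s$, extended by zero off $\tilde B_{k-1}$, determined by $\int U_k(x)x^\beta\,dx=\int_{\R^n\setminus\tilde B_{k-1}}M(x)x^\beta\,dx$ for all $|\beta|\le s$; transporting this fixed finite-dimensional problem from $B_0$ by the linear change of variables $x=A^{k_0+k-1}y$ shows that $U_k$ exists, is supported in $\tilde B_{k-1}$, and satisfies $\|U_k\|_{L^\infty}\le C\,b^{-(k_0+k-1)}\max_{|\beta|\le s}\big|\int_{\R^n\setminus\tilde B_{k-1}}M(x)(A^{-(k_0+k-1)}x)^\beta\,dx\big|$ with $C$ independent of $k$. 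Set $U_0=0$ --- legitimate since $M$ has vanishing moments up to order $s$ --- and put $a_k=M_k+U_{k+1}-U_k$. Each $a_k$ is supported in $\tilde B_k$; telescoping $\int a_k(x)x^\beta\,dx$ along the disjoint splitting $\R^n\setminus\tilde B_{k-1}=E_k\cup(\R^n\setminus\tilde B_k)$ shows $a_k$ has vanishing moments up to order $s$; and the partial sums $\sum_{k=0}^{K}a_k=M\chi_{\tilde B_K}+U_{K+1}$ converge to $M$ in $\mathcal S'$ once one checks $U_K\to0$.

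It then remains to write $a_k=\mu_k\alpha_k$ with $\alpha_k$ a genuine $(p,q,s)$ atom supported on $\tilde B_k$ and $\mu_k=\|a_k\|_q\,|\tilde B_k|^{1/p-1/q}$, and to prove $\sum_k\mu_k^p\le C\,N(M)^p$; Theorem~\ref{thm:atom} then yields $\|M\|_{H_A^p}^p\le C\sum_k\mu_k^p\le C\,N(M)^p$. For this one uses $\|M_k\|_q\le\min\{\|M\|_q,\ C\,b^{-(k_0+k)d}\|\rho^d M\|_q\}$ (the second bound because $\rho\gtrsim b^{k_0+k}$ on $E_k$), and for $\|U_k\|_q\le\|U_k\|_{L^\infty}|\tilde B_{k-1}|^{1/q}$ one bounds the coefficient integrals by Hölder's inequality, writing $|M|\le\rho^{-d}|\rho^d M|$ and invoking the anisotropic comparison $|A^{-(k_0+k-1)}x|^{|\beta|}\le C\,(b^{-(k_0+k-1)}\rho(x))^{s\zeta_+}$, valid for $\rho(x)\ge b^{k_0+k-1}$, from \eqref{EQ1}; the defining inequality \eqref{Mole-d} for $d$ is exactly what makes the resulting $q'$-integral finite and gives $\|U_k\|_q\le C\,b^{-(k_0+k)d}\|\rho^d M\|_q$. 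Substituting these into $\sum_k\mu_k^p=\sum_k\|a_k\|_q^p\,b^{(k_0+k)(1-p/q)}$ and splitting the sum at the index where the two bounds for $\|M_k\|_q$ cross produces two geometric series, which --- because \eqref{Mole-d}, together with $p<q$ and the admissibility of $s$, keeps the relevant exponents negative --- both converge and are each $\lesssim\|M\|_q^{p(1-\theta)}\|\rho^d M\|_q^{p\theta}=N(M)^p$, with $\theta=(1/p-1/q)/d$ appearing precisely because of the choice $b^{k_0}\sim(\|\rho^d M\|_q/\|M\|_q)^{1/d}$. I expect this last computation --- carrying the $k_0$-dependence through and checking the sign of each exponent in terms of $b,\lambda_\pm,p,q,s,d$ --- to be the main technical obstacle; it is the anisotropic counterpart of the Taibleson--Weiss estimate, with $b$ and $\lambda_\pm$ in place of the dyadic ratios.

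The two-sided statement is then formal. If $f=\sum_j M_j$ in $\mathcal S'$ with $\sum_j N(M_j)^p<\infty$, then each $M_j\in H_A^p$ with $\|M_j\|_{H_A^p}\le C\,N(M_j)$ by the above, and since $\|\cdot\|_{H_A^p}^p$ is subadditive for $0<p\le1$, Fatou's lemma applied to the radial maximal function gives $\|f\|_{H_A^p}^p\le\sum_j\|M_j\|_{H_A^p}^p\le C\sum_j N(M_j)^p$. Conversely, given $f\in H_A^p$, Theorem~\ref{thm:atom} provides an atomic decomposition $f=\sum_i\lambda_i a_i$ with $\sum_i|\lambda_i|^p\le C\|f\|_{H_A^p}^p$; since $N(a_i)\le C$ uniformly, each $M_i:=\lambda_i a_i$ is a molecule with $N(M_i)\le C|\lambda_i|$, so $f=\sum_i M_i$ with $\sum_i N(M_i)^p\le C\|f\|_{H_A^p}^p<\infty$.
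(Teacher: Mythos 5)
Your proof is correct and follows essentially the same route as the paper's: both split the molecule over the nested ellipsoids starting at the natural scale $b^{k_0}\sim\sigma$ (where $\|M\|_q=\sigma^{1/q-1/p}$), compensate moments by a degree-$\le s$ polynomial on each scale, estimate the resulting atomic coefficients using the two factors of $N(M)$ together with \eqref{Mole-d}, and invoke Theorem~\ref{thm:atom}. The only cosmetic difference is that you build the compensating polynomials $U_k$ directly to match the tail moments $\int_{\R^n\setminus\tilde B_{k-1}}Mx^\beta\,dx$ (the Taibleson--Weiss style), whereas the paper realizes the same cancellation through the complementary projections $\tilde\pi_{B_j}=\mathrm{Id}-\pi_{B_j}$ and the telescoping sequence $g_j=(\tilde\pi_{B_j}M)\mathbf{1}_{B_j}$; these devices agree up to a sign and an index shift.
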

\section{Proof of the Multiplier Theorem \ref{Thm-Mult1}}

In proving the multiplier operator is bounded on $H_A^p$, we will follow this outline. 
	\begin{enumerate}
	\item Show that our multiplier operator is a convolution operator of a certain regularity. This is the key result of this paper, given by Lemma \ref{lem:DW}. 
	\item As is often the case with Hardy spaces, we show it suffices to verify the action of operators on atoms. As we will see in Lemma \ref{lem:UB-atom}, we only need to consider $(p, \infty, s)$ atoms.  
	\item Lastly, by Lemma \ref{lem:MB}, we show that the action of this operator on atoms will produce molecules whose (molecular) norms are uniformly bounded. By Theorem \ref{Thm:Molecular}, this completes the proof of Theorem \ref{Thm-Mult1}.
	\end{enumerate}
	In this section, we state these lemmas, and provide a proof of Theorem \ref{Thm-Mult1} (which follows immediately). The proofs of these lemmas are in the next section. 

\vskip 0.5 cm

We start by generalizing the notion of regularity to the anisotropic setting, taken from \cite{MR1982689}. 	
\begin{definition}Let $(p, q, s)$ be admissible and let $R \in \N$ satisfy
    \begin{align}
    \label{CZ-R}
    R> \max \left\{ \left( \frac{1}{p} - 1 \right) \frac{\log b}{\log \lambda_-} , s \frac{\log \lambda_+}{\log \lambda_-} \right\},
    \end{align}
and let $K \in C^R (\R^n \backs \{ 0 \})$. We say $K$ is a Calder\'{o}n-Zygmund convolution kernel of order $R$ if there exists a constant $C$ such that for all multi-indices $\alpha$ with $|\alpha| \leq R$, and all $k \in Z$, $x \in B_{k + 1} \backs B_k$,
    \begin{align}\label{Def-CZ}
    |D_A^{-k} \p_x^{\alpha} D_A^k K(x)| \leq \frac{C}{\rho(x)}.
    \end{align}
If $K$ is such a kernel, we say $K$ satisfies CZC-$R$ and its associated singular integral operator $T$ is defined by $Tf = K \ast f$.
\end{definition}

The following lemma is our key result. 
\begin{lem}\label{lem:DW} Let $N \in \N$ and $m \in L_{loc}^1 (\R^n \backs \{ 0 \})$. Suppose $m$ satisfies the Mihlin condition \eqref{Def-M} of order $N$, and define $K$ by $K = \check{m}$. Then $K$ is a Calder\'{o}n-Zygmund convolution kernel of order $R$ provided $R \in \N$ and satisfies
    \begin{align}\label{Lem-R}
    0 \leq R <  \left( N \frac{\ln \lambda_-}{\ln b} - 1 \right) \frac{\ln b}{\ln \lambda_+}.
    \end{align}
\end{lem}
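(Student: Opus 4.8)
The plan is to show that $K = \check m$ satisfies the pointwise bound \eqref{Def-CZ} by combining the frequency-side decay encoded in the Mihlin condition with a Bernstein-type argument transported through the dilation structure. The fundamental identity I would use is that differentiation of $K$ in $x$ corresponds to multiplication by a polynomial on the frequency side, and that conjugation by $D_A^k$ on the $x$-side matches conjugation by $D_{A^*}^{-k}$ on the $\xi$-side, so that estimating $D_A^{-k}\partial_x^\alpha D_A^k K$ on the annulus $B_{k+1}\backslash B_k$ reduces, after a dilation, to estimating a fixed kind of object on $B_1\backslash B_0$ built from $D_{A^*}^{-j}\partial_\xi^\beta D_{A^*}^j m$ on a band of dyadic annuli. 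First I would reduce, by the dilation covariance of both $\rho$ and the Fourier transform, to proving the estimate at scale $k=0$, i.e. $|\partial_x^\alpha K(x)| \leq C/\rho(x)$ for $x \in B_1\backslash B_0$ — wait, more precisely to proving the rescaled estimate where the roles of $\lambda_\pm$ enter through how the annulus $B_{k+1}^*\backslash B_k^*$ relates to Euclidean annuli via \eqref{EQ1}.

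Next I would introduce a smooth dyadic (anisotropic) partition of unity $\sum_{j\in\Z}\psi_j \equiv 1$ on $\R^n\backslash\{0\}$ with $\psi_j$ supported on an annulus comparable to $B_{j+1}^*\backslash B_j^*$ and obtained from a fixed $\psi_0$ by $\psi_j = D_{A^*}^{-j}\psi_0$-type scaling, and write $K = \sum_j (m\psi_j)^\vee =: \sum_j K_j$. For each piece, the Mihlin condition of order $N$ gives control on $D_{A^*}^{-j}\partial_\xi^\beta D_{A^*}^j(m\psi_j)$ for $|\beta|\le N$, which by undoing the conjugation translates into an $L^1$ (or $L^2$, via Plancherel and the measure $b^j$ of the annulus) estimate on anisotropic derivatives of $K_j$ up to order $N$. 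Integration by parts / Bernstein then yields pointwise bounds on $\partial_x^\alpha K_j(x)$ for $|\alpha|\le R$ that decay in $|x|$ at a rate governed by the $N$ available frequency-side derivatives; using \eqref{EQ1} to convert between $|x|$ and $\rho(x)$ — which is exactly where $\lambda_-,\lambda_+$ and $b$ appear with the exponents $\zeta_\pm = \ln\lambda_\pm/\ln b$ — I get $|\partial_x^\alpha K_j(x)| \le C\, b^{j}\,\phi(b^j\,?)$ with a summable-in-$j$ profile precisely when $R$ obeys \eqref{Lem-R}. Summing the geometric series over $j$ (splitting at $\rho(x)\sim b^{-j}$, estimating the low-frequency tail one way and the high-frequency tail the other) produces the claimed $C/\rho(x)$.

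The hard part, and the genuinely new point over the isotropic case, will be the bookkeeping of the two competing eccentricities: when I move a derivative $\partial_\xi^\beta$ past the dilation $D_{A^*}^{j}$ I pick up factors controlled by powers of $\|A^j\|\sim\lambda_+^{?}$ or $\|A^{-j}\|\sim\lambda_-^{?}$ depending on the sign of $j$, and similarly the passage between $|x|$ and $\rho(x)$ uses $\zeta_+$ on one side and $\zeta_-$ on the other. Getting these to line up so that the summation over $j$ converges exactly under the stated inequality $R<(N\zeta_- - 1)/\zeta_+$ — equivalently tracking why it is $\ln\lambda_-$ that multiplies $N$ and $\ln\lambda_+$ that divides $R$ — is the crux; Remark \ref{def:tighten} will be used here to absorb endpoint losses by slightly shrinking $\lambda_+$ and enlarging $\lambda_-$ before running the estimates. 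I also expect to need the constraint $|B_0|=1$ together with the doubling constant $b^\omega$ to make the annuli $B_{j+1}^*\backslash B_j^*$ genuinely comparable to the supports of the $\psi_j$, and to handle the overlap between consecutive pieces $K_j, K_{j\pm1}$ when differentiating the partition of unity.
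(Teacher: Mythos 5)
Your outline matches the paper's proof in all essential respects: an anisotropic Littlewood--Paley decomposition $K=\sum_j K_j$ with $K_j=(m\widehat\Psi_j)^\vee$, reduction by dilation covariance to estimating $\sum_{j}|\partial_x^\beta K_j(x)|$ for $x$ in the unit annulus (i.e.\ $k=0$), integration by parts $N$ times against the oscillation $e^{2\pi i\langle A^j x,u\rangle}$ using the Mihlin control on $D_{A^*}^{-j}\partial_\xi^\delta D_{A^*}^j m$, and a split of the $j$-sum into a low part (bounded trivially) and a high part (summed as a geometric series).

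Two points to adjust. First, Remark~\ref{def:tighten} is not what absorbs the endpoint losses in Lemma~\ref{lem:DW}; it is used only in the proof of Theorem~\ref{Thm-Mult1} to handle the case $L\in\N$. In the lemma, the slack comes from Gelfand's spectral-radius formula: one fixes $M$ so that $\|A^{*j}\|_{\mathrm{op}}^{1/j}\le(1+\epsilon)\lambda_+$ for all $j>M$, and the strict inequality in \eqref{Lem-R} leaves room to choose $\epsilon$ small enough that the ratio $b(\lambda_+(1+\epsilon))^{|\beta|}/b^{N\zeta_-}$ is $<1$, making $S_H$ summable. Second, the mechanism that converts the $N$ integrations by parts into decay is worth stating explicitly: for each $j$ and $x$ one picks $\alpha=Ne_i$ in the direction where $A^j x$ is largest, so that $|(A^j x)^\alpha|\gtrsim|A^j x|^N$; combined with \eqref{EQ1}, which gives $|A^j x|\gtrsim b^{j\zeta_-}$ for $x\in B_1\backslash B_0$ and $j$ large, this is precisely where the exponent $N\zeta_-$ enters and why it is $\lambda_-$ that multiplies $N$ in \eqref{Lem-R}. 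The rest of your sketch, including the Parseval step and the use of $\zeta_\pm$ to pass between $\rho$ and $|\cdot|$, is consistent with the paper.
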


The general method in proving an operator $T : H_A^p \rightarrow H_A^p$ is bounded is to show that $T$ is uniformly bounded on all $(p, q, s)$ atoms, that is, $\| T a \|_{H_A^p} \leq C$ where $a$ is a $(p, q, s)$ atom. However, as we see in \cite{MR2163588}, in general it is not sufficient to deal with $(p, \infty, s)$ atoms, though by the work of Meda et al \cite{MR2399059}, it suffices if $q < \infty$. This suggests that we simply need to show our operator satisfies $\| T a \|_{H_A^p} < \infty$ for $(p, 2, s)$ atoms.

However, this approach will not work for us, because of the following complication. Observe that we have the inclusions of the subspaces
    \[ H_{fin}^{p, 2} \subseteq L^2 \cap H_A^p \subseteq H_A^p. \]
Suppose we use the approach outlined above, and after verifying $\| Ta \|_{H_A^p} \leq C$ for all $(p, 2, s)$ atoms, we can then extend $T : H_{fin}^{p, 2} \rightarrow H_A^p$ to the unique bounded extension $\tilde{T} : H_A^p \rightarrow H_A^p$. Next, consider the operator $T_m$ on the (middle) subspace $L^2 \cap H_A^p$, which we initially defined by $T_m f = (m \hat{f})^{\vee}$. It is not clear that the extension $\tilde{T}$ will agree with $T_m$ on $(L^2 \cap H_A^p) \backslash H_{fin}^{p, 2}$. Because of this uncertainty, we cannot conclude that $\tilde{T}$ is indeed the extension of $T_m$ on $H_A^p$.

Fortunately, for multiplier operators, we have another approach, aided by a regularity result of \cite[Theorem 1]{MR3043011}. This approach also shows that it suffices, at least in our case, to verify uniform boundedness of $(p, \infty, s)$ atoms.
\begin{lem}\label{lem:UB-atom} Suppose $(p, \infty, s)$ is an admissible triple, $m \in L^{\infty}$ and $T_m$ is the associated multiplier operator initially defined on $L^2 \cap H_A^p$. Then $T_m$ has a a unique, bounded extention $\tilde{T}_m : H_A^p \rightarrow H_A^p$ if for all $(p, \infty, s)$ atoms,
	\[ \| T_m a \|_{H_A^p} \leq C, \]
where $C$ is independent of the atom $a$.
\end{lem}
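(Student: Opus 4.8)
The plan is to reduce the claim to a density-plus-uniform-bound argument, using the regularity result of \cite{MR3043011} to resolve the subtle compatibility issue flagged above. First I would recall that, by Theorem \ref{thm:atom}, the finite-atom space $H_{fin}^{p, \infty}$ (finite linear combinations of $(p, \infty, s)$ atoms) is dense in $H_A^p$, and that by \cite[Theorem 1]{MR3043011} the $H_A^p$ quasinorm is, on $H_{fin}^{p,\infty}$, comparable to the infimum of $\|(\lambda_i)\|_{\ell^p}$ over \emph{finite} atomic decompositions. Granting $\|T_m a\|_{H_A^p}\le C$ for every $(p,\infty,s)$ atom, for any $g=\sum_{i=1}^M\lambda_i a_i\in H_{fin}^{p,\infty}$ we get $\|T_m g\|_{H_A^p}^p\le\sum_i|\lambda_i|^p\|T_m a_i\|_{H_A^p}^p\le C^p\sum_i|\lambda_i|^p$, and infimizing over finite decompositions gives $\|T_m g\|_{H_A^p}\le C\|g\|_{H_A^p}$. (Here one must check $T_m$ is linear on $H_{fin}^{p,\infty}$ and that $T_m g$ makes sense, which is fine since $H_{fin}^{p,\infty}\subset L^2$.) Hence $T_m$ restricted to $H_{fin}^{p,\infty}$ is bounded into $H_A^p$, and by density it extends uniquely to a bounded $\tilde T_m:H_A^p\to H_A^p$.

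The remaining, and genuinely delicate, point is to argue that this extension $\tilde T_m$ really is an extension of $T_m$ as originally defined on all of $L^2\cap H_A^p$ — not merely on the finite-atom subspace. This is exactly the gap the authors warn about in the paragraph preceding the lemma. The strategy is: take $f\in L^2\cap H_A^p$; approximate it by $g_k\in H_{fin}^{p,\infty}$ with $g_k\to f$ in $H_A^p$. Then $\tilde T_m f=\lim_k T_m g_k$ in $H_A^p$, hence also (passing to a subsequence) in $\mathcal S'$. On the other hand $T_m f-T_m g_k=(m(\hat f-\hat g_k))^\vee$, and I would like $g_k\to f$ in $L^2$ as well, so that $T_m g_k\to T_m f$ in $L^2$ (since $m\in L^\infty$ makes $T_m$ bounded on $L^2$), hence in $\mathcal S'$. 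Matching the two $\mathcal S'$-limits gives $\tilde T_m f = T_m f$. The subtlety — and the main obstacle — is producing approximants $g_k$ that converge to $f$ \emph{simultaneously} in $H_A^p$ and in $L^2$; this is the content of the density statement one extracts from \cite[Theorem 1]{MR3043011} and the structure of $L^2\cap H_A^p$ (e.g. via a Calderón-type reproducing/atomic decomposition of $f\in L^2\cap H_A^p$ whose partial sums converge in both norms, or by truncating and mollifying an atomic decomposition). I expect the bulk of the work is in justifying this dual-convergence carefully; the $\ell^p$-summation estimate in the first paragraph is routine once the finite-atom quasinorm equivalence is invoked.

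One more housekeeping step: uniqueness of the bounded extension is automatic from density of $H_{fin}^{p,\infty}$ in $H_A^p$, so no separate argument is needed there. I would also note explicitly that the hypothesis $m\in L^\infty$ is used twice — once to make $T_m$ well-defined and $L^2$-bounded (so the original definition on $L^2\cap H_A^p$ is legitimate), and once in the $\mathcal S'$-matching step above — while the Mihlin regularity of order $N$ is \emph{not} needed for this lemma at all; it enters only later, through Lemma \ref{lem:DW} and Lemma \ref{lem:MB}, to actually verify the per-atom bound $\|T_m a\|_{H_A^p}\le C$. So the proof of Lemma \ref{lem:UB-atom} is purely a soft functional-analytic reduction, and its only non-trivial ingredient is the $H_A^p$-versus-$L^2$ simultaneous approximation supplied by the finite-atom theory.
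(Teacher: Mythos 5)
Your first paragraph contains a genuine error. You assert, citing \cite[Theorem 1]{MR3043011}, that on $H_{fin}^{p,\infty}$ the $H_A^p$ quasinorm is comparable to the infimum of $\|(\lambda_i)\|_{\ell^p}$ over \emph{finite} $(p,\infty,s)$ atomic decompositions. That is not what \cite{MR3043011} proves (it is a theorem about the pointwise decay and continuity of $\hat f$ for $f\in H_A^p$), and worse, the claim itself is false: the whole point of Bownik's example \cite{MR2163588}, which the paper cites in the paragraph preceding the lemma, is that for $q=\infty$ the finite-atom quasinorm on $H_{fin}^{p,\infty}$ need \emph{not} be comparable to $\|\cdot\|_{H_A^p}$. (Meda, Sj\"ogren, and Vallarino \cite{MR2399059} repair this only for $q<\infty$.) So your ``infimize over finite decompositions'' step does not give $\|T_m g\|_{H_A^p}\le C\|g\|_{H_A^p}$ for $g\in H_{fin}^{p,\infty}$, and the density-plus-uniform-bound scaffolding collapses at the outset. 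You also acknowledge that the second-paragraph ``dual convergence in $H_A^p$ and $L^2$'' bridge is left open; as written it is a placeholder, not a proof.

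The paper's actual argument sidesteps both difficulties by never invoking finite decompositions and never needing to reconcile two different extensions. It takes $f\in L^2\cap H_A^p$ directly, chooses an \emph{infinite} atomic decomposition $f=\sum_j\lambda_j a_j$ into $(p,\infty,s)$ atoms (for which the $\ell^p$--quasinorm equivalence from Theorem \ref{thm:atom} is unproblematic), and then uses \cite[Theorem 1]{MR3043011} for what it actually says: $\hat f(\xi)=\sum_j\lambda_j\hat a_j(\xi)$ pointwise a.e., so that $m\hat f=\sum_j\lambda_j m\hat a_j$ and hence $T_mf=\sum_j\lambda_j T_m a_j$ in $\mathcal S'$. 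This exploits the specific convolution/multiplier structure of $T_m$ to pass the operator through the infinite sum; it is not a soft density argument and is not available for a general sublinear $T$. Once $T_m f=\sum_j\lambda_j T_m a_j$ is in hand, the $p$-triangle inequality and the per-atom bound give boundedness of $T_m$ on all of $L^2\cap H_A^p$, and the unique bounded extension to $H_A^p$ follows by density with no compatibility question. Your concluding remark that the Mihlin regularity of $m$ plays no role in this lemma is correct; only $m\in L^\infty$ is used.

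If you want to keep the spirit of your proposal, the fix is to drop the finite-atom reduction entirely: work with infinite decompositions of $f\in L^2\cap H_A^p$, and replace your ``simultaneous $L^2$ and $H_A^p$ approximation'' bridge by the Fourier-side identity $m\hat f=\sum_j\lambda_j m\hat a_j$ supplied by \cite[Theorem 1]{MR3043011}. That is precisely the mechanism the paper uses to make the interchange $T_m\sum=\sum T_m$ legitimate for a multiplier.
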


This last lemma (and the regularity condition \eqref{Def-CZ}) first appeared in \cite[Theorem 9.8]{MR1982689} for the more general Calder{\'o}n-Zygmund operators. We give an alternate proof using Theorem \ref{Thm:Molecular}.

\begin{lem}\label{lem:MB}  Let $R \in \N$. Suppose $T$ is a singular integral operator whose kernel $K$ is a Calder\'{o}n-Zygmund convolution kernel of order $R$. Then $T : H_A^p (\R^n) \rightarrow H_A^p (\R^n)$ is bounded provided $p$ satisfies
    \begin{align}\label{SIO-prange}
    0 < \frac{1}{p} - 1 < R \left( \frac{(\log \lambda_-)^2}{\log b \log \lambda_+} \right).
    \end{align}
    \end{lem}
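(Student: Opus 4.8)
The plan is to prove Lemma \ref{lem:MB} by combining Lemma \ref{lem:UB-atom} with the molecular machinery of Theorem \ref{Thm:Molecular}: it suffices to fix an admissible triple $(p, \infty, s)$ with $s = \lfloor (\frac1p - 1)\frac{\ln b}{\ln \lambda_-}\rfloor$, take a $(p, \infty, s)$ atom $a$ supported on $x_0 + B_{j_0}$, and show that $Ta = K \ast a$ is (a uniform constant times) a $(p, \infty, s, d)$ molecule centered at $x_0$ for some admissible choice of $d$; the bound $\|Ta\|_{H_A^p} \le C N(Ta) \le C$ then follows from Theorem \ref{Thm:Molecular}, and Lemma \ref{lem:UB-atom} upgrades uniform boundedness on atoms to boundedness of the (unique) extension on all of $H_A^p$. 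By translation and dilation invariance of the hypotheses — conjugating $K$ by $D_A$ preserves CZC-$R$ by \eqref{Def-CZ}, and both the atom conditions and the molecular norm $N(\cdot)$ transform covariantly under $x \mapsto A^{j_0}(x) + x_0$ — we may reduce to $x_0 = 0$ and $j_0 = 0$, i.e.\ $a$ supported on $B_0$ with $\|a\|_\infty \le 1$.

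The verification that $Ta$ is a molecule has two parts. The vanishing moments condition $\int x^\beta (K \ast a)(x)\,dx = 0$ for $|\beta| \le s$ is formal (it holds because $T$ is a convolution operator, hence commutes with polynomials against which $a$ integrates to zero) and is the routine part; more precisely one checks $\widehat{K \ast a} = m\hat a$ vanishes to order $s$ at the origin since $\hat a$ does and $m \in L^\infty$ — though a little care is needed because $K$ is only locally integrable away from $0$, so one argues via the standard splitting of $a$ into its action near and far from the singularity, or directly with the CZ estimates. The size estimates are the substance: one must bound $\|K \ast a\|_\infty$ and $\|\rho(\cdot)^d (K\ast a)\|_\infty$. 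For the first, split $K \ast a(x) = \int_{\rho(x-y) \le 1} + \int_{\rho(x-y) > 1}$; on the near region use $|K(z)| \le C/\rho(z)$ (the $\alpha = 0$ case of \eqref{Def-CZ}) which is $\rho$-integrable near $0$ over a bounded set, and on the far region use $\|a\|_\infty |B_0|$ together with the decay of $K$, giving $\|K\ast a\|_\infty \le C$. For $\|\rho(x)^d (K\ast a)(x)\|_\infty$ one exploits the $s+1$ vanishing moments of $a$: Taylor-expand $K$ about $x$ to order $s$, so that for $\rho(x)$ large compared to $\rho(y)$ (i.e.\ $y \in B_0$) one gains $|K(x - y) - \sum_{|\alpha|\le s}\frac{(-y)^\alpha}{\alpha!}\partial^\alpha K(x)| \le C \rho(y)^{(s+1)\zeta_+}\rho(x)^{-1-(s+1)\zeta_-}$ via the anisotropic mean value / Taylor estimate controlled by the scaled derivative bounds \eqref{Def-CZ} and the norm comparison \eqref{EQ1}; multiplying by $\rho(x)^d$ and integrating over $y \in B_0$ stays bounded precisely when $d < (s+1)\zeta_- + \text{(something)}$, and one also needs $d$ to exceed the Definition \ref{Def:Molecules} lower bound $d > s\zeta_+ + 1 - \frac1q = s\zeta_+ + 1$ (here $q = \infty$). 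The point is that the hypothesis \eqref{SIO-prange} on $p$, via $R > (\frac1p - 1)\frac{\log b}{\log\lambda_-} \ge s$ and $R > s\frac{\log\lambda_+}{\log\lambda_-}$ from \eqref{CZ-R}, is exactly what guarantees an admissible $d$ exists strictly between these two bounds, so that $N(Ta) = \|Ta\|_\infty^{1-\theta}\|\rho(\cdot-x_0)^d Ta\|_\infty^\theta \le C$.

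The main obstacle is the far-field size estimate for $\|\rho(x)^d (K\ast a)(x)\|_\infty$ and, bundled with it, pinning down that the window for $d$ is nonempty. The difficulty is genuinely anisotropic: the Taylor remainder must be estimated using the \emph{scaled} derivatives $D_A^{-k}\partial^\alpha D_A^k K$, and converting a bound on these into a usable pointwise bound on $\partial^\alpha K(x)$ itself requires \eqref{EQ1} and introduces the eccentricity exponents $\zeta_\pm$, which is why the clean isotropic exponent count $d < s + 1 + n$ gets distorted into the ratios $\frac{\log\lambda_-}{\log b}$, $\frac{\log\lambda_+}{\log b}$ appearing in \eqref{SIO-prange} and \eqref{Mole-d}. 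One has to track these exponents carefully — choosing $\lambda_-, \lambda_+$ slightly tightened as in Remark \ref{def:tighten} to absorb borderline cases — and check that the gap between $s\zeta_+ + 1$ and the upper constraint coming from integrability is controlled from below by $R\zeta_-$-type quantities, which is where \eqref{SIO-prange} enters decisively. Everything else — the near-field estimate, the moment condition, and the reductions — is bookkeeping by comparison.
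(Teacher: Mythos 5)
There is a genuine gap, and it sits exactly where you place the ``substance'' of the argument: you try to make $Ta$ a $(p,\infty,s,d)$ molecule, i.e.\ to bound $\|K\ast a\|_\infty$ and $\|\rho(\cdot)^d K\ast a\|_\infty$. The near-field step is false as stated: the bound $|K(z)|\le C/\rho(z)$ (the $\alpha=0$ case of \eqref{Def-CZ}) is \emph{not} integrable near the origin — $\rho^{-1}$ is the anisotropic analogue of $|x|^{-n}$, since each annulus $B_{j+1}\setminus B_j$ contributes $\int_{B_{j+1}\setminus B_j}\rho(z)^{-1}dz \approx b-1$ and the sum over $j\to-\infty$ diverges. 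More fundamentally, no repair of this step can work, because a Calder\'on--Zygmund convolution operator does not map bounded compactly supported functions (even atoms) into $L^\infty$: already the Hilbert transform of a one-dimensional atom has logarithmic singularities at the endpoints of its support, so $\|Ta\|_\infty=\infty$ in general and $Ta$ is simply not an $L^\infty$-based molecule. The paper sidesteps this by never using $q=\infty$ in the molecular norm: a $(p,\infty,s)$ atom is automatically a $(p,q,s)$ atom for finite $q>1$, $T$ is bounded on $L^q$ by Calder\'on--Zygmund theory on the space of homogeneous type $(\R^n,dx,\rho)$, and that $L^q$ bound handles both $\|Ta\|_q$ and the near part $I_1$ of $\|\rho(\cdot-x_0)^d Ta\|_q$; only the far part uses a pointwise decay estimate.

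Your far-field idea (Taylor-expand $K$ against the vanishing moments of $a$) is morally the right mechanism, but the decay you should get — and the one the paper imports from \cite[Lemma 9.5]{MR1982689} — is governed by the kernel order $R$, namely $|Ta(x)|\le C\,b^{-l(1+R\zeta_-)}|B_r|^{-1/p}$ on the annuli $x_0+(B_{r+l+2\omega+1}\setminus B_{r+l+2\omega})$, not by an exponent of the form $(s+1)\zeta_\pm$; it is precisely the competition $d< R\zeta_-+1-\tfrac1q$ (convergence of the geometric series in $I_2$) against $d>s\zeta_+ +1-\tfrac1q$ (admissibility in Definition \ref{Def:Molecules}) that produces the range \eqref{SIO-prange}, so your exponent bookkeeping would also need to be redone in terms of $R$. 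Finally, note that invoking $\widehat{K\ast a}=m\hat a$ and differentiating at the origin to get the vanishing moments is delicate, since $m$ has no smoothness at $0$; the moment condition should instead be justified from the decay of $Ta$ itself. The overall architecture (atoms $\to$ molecules $\to$ Theorem \ref{Thm:Molecular}, then Lemma \ref{lem:UB-atom}) matches the paper; the failure is the $L^\infty$-molecule reduction, which must be replaced by the finite-$q$ argument.
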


Now that all the pieces are here, we can prove Theorem \ref{Thm-Mult1}.

\begin{proof}[Proof of Theorem \ref{Thm-Mult1}] Suppose $m$ is a multiplier satisfying the Mihlin condition \eqref{Def-M} of order $N$, and $L \not\in \N$. Then by Lemma \ref{lem:DW}, we have a kernel $K$ of order $R$, satisfying \eqref{Lem-R} such that $\hat{K} = m$. Then by Lemma \ref{lem:MB}, the operator $Tf = K \ast f$ satisfies the bound $\| T a \|_{H_A^p} \leq C$ for all $(p, \infty, s)$ atoms, which by Lemma \ref{lem:UB-atom}, gives a unique extension $\tilde{T} : H_A^p \rightarrow H_A^p$, provided $p$ is in the range \eqref{SIO-prange}, which implies the range given in Theorem \ref{Thm-Mult1}.

However, if $L \in \N$, then $\lceil L - 1 \rceil \leq \lfloor L \rfloor$. To make the above argument hold, recall Remark \ref{def:tighten}, and let $\tilde{\lambda}_-$ and $\tilde{\lambda}_+$ be defined so that 
	\[ 1 < \lambda_- < \tilde{\lambda}_- < |\lambda_1| \leq \ldots \leq |\lambda_n| < \tilde{\lambda}_+ < \lambda_+, \]  
	so that the new $\tilde{L}$, defined in terms of the new eccentricities, is slightly larger, and no longer an integer. However, $\lfloor \tilde{L} \rfloor = \lfloor L \rfloor$, and we can repeat the above argument and obtain the bound \eqref{range:multiplier}. 
\end{proof}

\section{Proofs of Lemmas and the Molecular Decomposition} 

In this section, we give the proofs of Lemma \ref{lem:DW}, \ref{lem:UB-atom}, and \ref{lem:MB}, as well as the proof of Theorem \ref{Thm:Molecular}. Lemma \ref{lem:DW} is the key result of this paper. Lemma \ref{lem:MB} and Theorem \ref{Thm:Molecular} originally appear in \cite{MR1982689}, and we reprove it here with our notion of molecules.

\subsection{Proof of lemmas}

\begin{proof}[Proof of Lemma \ref{lem:DW}] Let $m$ satisfy the Mihlin condition of order $N$ and let $R$ satisfy \eqref{Lem-R}. Fix $\Psi \in S(\R^n)$ such that $\hat{\Psi}$ is supported on $B_1^{\ast} \backslash B_{-1}^{\ast}$, and for all $\xi \neq 0$,
    \begin{align*}
    \sum_{j \in \Z} \hat{\Psi}(A^{-j} \xi) = 1.
    \end{align*}
By setting $\Psi_j (x) = b^j \Psi (A^j x)$, we have the identity $\widehat{\Psi_j}(\xi) = D_{A^{\ast}}^{-j} \hat{\Psi} (\xi) = \hat{\Psi}((A^{*})^{ -j}) \xi)$, and $\widehat{\Psi_j}$ is supported on $B_{j + 1}^{\ast} \backslash B_{j - 1}^{\ast}$. We define $m_j = m \widehat{\Psi}_j$, which is supported on $B_{j + 1}^{\ast} \backslash B_{j -1}^{\ast}$, and define $K_j = (m_j)^{\vee}$. Then we have
        \begin{align*}
        m 	&= \sum_{j \in \Z} m_j \qquad \text{ holds pointwise and in } \mathcal{S}', \text{ and } \qquad
        K 	= \sum_{j \in \Z} K_j \text{ in } \mathcal{S}'.
        \end{align*}
We will see that the equality for $K$ also holds pointwise. We make the following reductions to prove the CZC-$R$ condition \eqref{Def-CZ}. First, it suffices to show that for all multi-index $\beta$ such that $|\beta| \leq R$, $k \in \Z$, and $x \in B_{1} \backs B_0$, $|\p_x^{\alpha} D_A^k K(x)| \leq C/b^{k}$, which follows from the absolute convergence
    \[
    \sum_{j \in \Z} |\p_x^{\beta} D_A^k K_j (x)| \leq \frac{C}{b^{k}}.
    \]
To prove this, it suffices to prove the above convergence for $k = 0$:
    \begin{align}\label{CZ-Goal}
    \sum_{j \in \Z} |\p_x^{\beta} K_j (x)| \leq C.
    \end{align}
Indeed, suppose \eqref{CZ-Goal} holds. Then if $k \in \Z$, and $m$ has the Mihlin property, then so does $D_{A^{\ast}}^k m$, with the same constant $C$. Therefore if $\xi \in B_{j + 1}^{\ast} \backslash B_j^{\ast}$, then $(A^*)^ k \xi \in B_{j + k + 1}^* \backslash B_{j + k}^*$, so
        \begin{align*}
        |(D_{A^{\ast}}^{-j} \, \p_{\xi}^{\beta} \, D_{A^{\ast}}^j) (D_{A^{\ast}}^k m)(\xi)|
            &= |(D_{A^{\ast}}^{-j - k} \, \p_{\xi}^{\beta} \, D_{A^{\ast}}^{j + k} m((A^*)^k  \xi)| \leq C_{\beta}.
        \end{align*}
To prove \eqref{CZ-Goal}, we decompose the sum using a well-chosen integer $M$. Denote $\lambda_{\max}^{\ast}$ as the eigenvalue of $A^{\ast}$ with the largest norm and $\| \cdot \|_{\mathrm{op}}$ is the operator norm on $\R^{n} \rightarrow \R^{n}$. By the spectral theorem,
    \[ \lambda_{\max}^{\ast} = \limsup_{j \rightarrow \infty} \| A^{\ast j} \|_{\mathrm{op}}^{1/j}. \]
Let $\epsilon > 0$. Then there exists an integer $M > 0$ such that for all $j > M$,
    \[ \| A^{\ast j} \|_{\mathrm{op}}^{1/j} \leq (1 + \epsilon) \lambda_{\max}^{\ast} \leq (1 + \epsilon) \lambda_+ . \]
With this $M$, we write
    \[ \sum_{j \in \Z} |\p_x^{\beta} K_j (x)| = \sum_{j \leq M} |\p_x^{\beta} K_j (x)| + \sum_{j > M} |\p_x^{\beta} K_j (x)| = S_L + S_H. \]
We call $S_L$ and $S_H$ the low and high spatial terms, respectively. Starting with the high spatial terms, we fix $j > M$ and $x \in B_{-1} \backs B_0$. Then we can fix another multi-index $\alpha$ satisfying $|\alpha| = N$ such that there exists a constant $c$ depending only on $n$ such that $|(A^j x)^{\alpha}| \geq c|A^j x|^N$. This can be done by picking $\alpha = Ne_i$ where $e_i$ is the $i^{th}$ unit vector in the canonical basis of $\R^n$ and the direction $i$ is where $A^j x$ has the largest value in norm. Define  $w(u) = (A^{\ast j} u)^{\beta} m(A^{\ast j} u) \hat{\Psi}(u)$. Using Parseval's identity, integration by parts, and a change of variables, we have
    \begin{align*}
    \p_x^{\beta} K_j (x) = c_{\beta} b^j \int_{B_1^{\ast} \backs B_{-1}^{\ast}} (\p_u^{\alpha} w)(u) \frac{e^{2\pi i \langle A^j x, u \rangle}}{(2\pi i A^j x)^{\alpha}} du,
    \end{align*}
which we estimate using the bound from the spectral theorem.

Then the product rule gives:
    \begin{align}
    \label{I1I2}
    (\p^{\alpha} w)(u) = \sum_{\gamma \leq \alpha} {\alpha \choose \gamma} \underbrace{\p^{\alpha - \gamma} (D_{A^{\ast}}^j m \cdot \hat{\Psi}) (u)}_{I_1} \cdot \underbrace{\p^{\gamma} ((A^{\ast j} u)^{\gamma})}_{I_2}.
    \end{align}
By another application of the product rule, we have a uniform constant $c'$, independent of $m$, $j$, $u$ such that
    \begin{align*}
    I_1 \leq c \, \sup_{\delta \leq \gamma} |\p^{\delta} D_{A^{\ast}}^j m)(u)|
    &= c \, \sup_{\delta \leq \gamma} |(D_{A^{\ast}}^{-j} \p^{\delta} D_{A^{\ast}}^j m)(A^{\ast j} u)| \leq c'.
    \end{align*}
We now bound $I_2$. With $u \in B_1^{\ast} \backs B_{-1}^{\ast}$, elementary considerations from expressing $(A^{\ast j} u)^{\beta}$ as a sum of monomials show that there exists $c$ depending only on $N$, such that by our choice of $M$ and $j > M$,
    \begin{align*}
    I_2 = |\p^{\gamma}(A^{\ast j} u)^{\beta}| \leq c \| A^{\ast j} \|_{\mathrm{op}}^{|\beta|} \leq c (\lambda_{+}^{\ast}(1 + \epsilon))^{j |\beta|},
    \end{align*}
Combining our estimates of $I_1$ and $I_2$ in \eqref{I1I2}, we have a constant $C$, depending on the past constants, such that
    \[ |(\p^{\alpha} w)(u)| \leq C (\lambda_+^{\ast} (1 + \epsilon))^{j |\beta|}. \]
Then we have
    \begin{align*}
    |\p^{\beta} K_j (x)|
            &\leq b^j \int_{B_1^{\ast} \backslash B_{-1}^{\ast}} \left| \frac{(\p^{\alpha} w) (u)}{(2\pi i A^j x)^{\alpha}} \right| du \leq C \left( \frac{b^j (\lambda_+^{\ast}(1 + \epsilon))^{j|\beta|}}{|A^j x|^{|\alpha|}} \right) \leq C \left( \frac{b^j (\lambda_+^{\ast}(1 + \epsilon))^{j |\beta|}}{b^{j |\alpha| \zeta_-}} \right).
    \end{align*}
Note that with our choice of $\alpha$ and $\eqref{EQ1}$, we can sum $|\p^{\beta} K_j (x)|$ for $j > M$ if
    \begin{align*}
    \frac{b(\lambda_+^{\ast}(1 + \epsilon))^{|\beta|}}{b^{|\alpha| \zeta_-}} < 1, \qquad \textrm{ that is, } \qquad |\beta| < \left( \frac{N \log \lambda_-}{\log b} - 1 \right) \frac{\log b}{\log(\lambda_+^{\ast}(1 + \epsilon))}.
    \end{align*}
Indeed, for $|\beta| \leq R$, there exists $\epsilon > 0$ such that the series below converges: For $C_1$ depending only on $A, n, \Psi, \beta, M$, we have
    \[ \sum_{j = M + 1}^{\infty} |\p^{\beta} K_j (x)| \leq C \sum_{j = M + 1}^{\infty} \left( \frac{b(\lambda_+^{\ast}(1 + \epsilon))^{|\beta|}}{b^{|\alpha| \zeta_-}} \right)^j \leq C_1.
    \]
Turning our attention to $S_L$, we start with Parseval's identity and a change of variables. With $C$ a dimensional constant, we have
    \begin{align*}
    |\p_x^{\beta} K_j (x)|
        &= \left| \int_{B_{j + 1}^{\ast} \backs B_{j - 1}^{\ast}} (2\pi i \xi)^{\beta} m_j (\xi) e^{2\pi i \langle x, \xi \rangle} d\xi \right| \\
        &\leq cb^j \int_{B_1^{\ast} \backs B_{-1}^{\ast}} \underbrace{|(A^{\ast j} u)^{\beta}|}_{J_1} \cdot \underbrace{|m_j (A^{\ast j} u)|}_{J_2} du
        \leq
        \begin{cases}
        C b^{j(1 + |\beta| \zeta_+)} &\textrm{ if } j \geq 0 \\
        C b^{j(1 + |\beta| \zeta_-)} &\textrm{ if } j < 0.
        \end{cases}
    \end{align*}
Indeed, for $u$ in the unit annulus $B_1^* \backs B_{-1}^*$, we have $A^{\ast j} u \in B_{j + 1}^{\ast} \backs B_{j -1}^{\ast}$, $J_1 \leq c |A^{\ast j} u|^{|\beta|} \leq C b^{j \zeta_{\pm}} \rho_{\ast} (u)^{\zeta_{\pm}}$, with the eccentricity $\zeta_{\pm}$ depending on the sign of $j$. Since $m \in L^{\infty}$ and $J_2 \leq C(m, \Psi)$, we obtain the above estimate. Returning to $S_L$, we have a constant $C$, depending only on $n, A, N, \Psi, M$ such that
    \begin{align*}
    S_L \leq \sum_{j = -\infty}^{-1} |\p_x^{\beta} K_j (x)| + \sum_{j = 0}^M \ |\p_x^{\beta} K_j (x)| \leq C \sum_{j = -\infty}^{-1} b^{j(1 + |\beta| \zeta_-)} + C \sum_{j = 0}^M b^{j(1 + |\beta| \zeta_+)} \leq C_2,
    \end{align*}
with $C_2 = C_2 (n, A, N, \Psi, M)$. This completes the estimate \eqref{CZ-Goal}, and this proof.
\end{proof}


For the proof of Lemma \ref{lem:UB-atom}, we need the following result from \cite{MR3043011}.

\begin{thrm}(\cite[Theorem 1]{MR3043011}) \label{lem:MBDW1} Let $p \in (0, 1]$. If $f \in H_A^p$, then $\hat{f}$ is a continuous function and satisfies
	\[ |\hat{f}(\xi)| \leq C \| f \|_{H_A^p} \rho_{\ast} (\xi)^{\frac{1}{p} - 1}. \]
In particular, if $f = \sum_j \lambda_j a_j$, then $\hat{f}(\xi) = \sum_j \lambda_j \hat{a_j} (\xi)$ almost everywhere and in $\mathcal{S}'$.
\end{thrm}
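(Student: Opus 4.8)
The plan is to deduce the pointwise bound from the atomic decomposition of Theorem~\ref{thm:atom} together with a sharp estimate for the Fourier transform of a single atom, and then to upgrade the resulting distributional identity to a pointwise one by an $\mathcal{S}'$-convergence argument. Fix $f\in H_A^p$ and write $f=\sum_j\lambda_j a_j$ with $(p,\infty,s)$ atoms $a_j$ and $\big(\sum_j|\lambda_j|^p\big)^{1/p}\le C\|f\|_{H_A^p}$. Each $a_j\in L^1$, so $\hat{a_j}$ is continuous, and the heart of the matter is the estimate
\[
\sum_j |\lambda_j|\,|\hat{a_j}(\xi)| \;\le\; C\,\|f\|_{H_A^p}\,\rho_{\ast}(\xi)^{\frac1p-1}, \qquad \xi\neq 0,
\]
with the convergence locally uniform on $\R^n\setminus\{0\}$.

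For a single atom I would normalize against the dilation structure. An atom $a$ supported on $x_0+B_k$ factors as $a(x)=b^{-k/p}\,\tilde a\big(A^{-k}(x-x_0)\big)$, where $\tilde a$ is supported on $B_0$, has $\|\tilde a\|_\infty\le1$, and retains the vanishing moments through order $s$; a change of variables gives $|\hat a(\xi)|=b^{k(1-1/p)}\,|\hat{\tilde a}(A^{\ast k}\xi)|$, which is independent of the center $x_0$. For $\hat{\tilde a}$ there are two competing bounds: the crude one $|\hat{\tilde a}(\eta)|\le\|\tilde a\|_1\le|B_0|=1$, and, after subtracting the degree-$s$ Taylor polynomial of $y\mapsto e^{-2\pi i\langle y,\eta\rangle}$ at the origin and invoking the moment conditions (with $|y|$ bounded on $B_0$), the cancellation bound $|\hat{\tilde a}(\eta)|\le C|\eta|^{s+1}$, valid for all $\eta$. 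Together these give $|\hat a(\xi)|\le C\,b^{k(1-1/p)}\min\!\big(1,|A^{\ast k}\xi|^{s+1}\big)=:c^{(k)}(\xi)$, a quantity depending only on the scale $k$ and on $\xi$.

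The summation over scales is the main point. Let $\ell\in\Z$ satisfy $\rho_{\ast}(\xi)=b^\ell$, so $\rho_{\ast}(A^{\ast k}\xi)=b^{k+\ell}$ and \eqref{EQ1}, applied to $A^\ast$, controls $|A^{\ast k}\xi|$ by $b^{(k+\ell)\zeta_\pm}$, the sign of $k+\ell$ selecting $\zeta_-$ or $\zeta_+$. Using the elementary inequality $\sum a_j\le\big(\sum a_j^p\big)^{1/p}$ for $0<p\le1$, grouping atoms by scale, and $\sum_{j:k_j=k}|\lambda_j|^p\le\sum_j|\lambda_j|^p$, the displayed estimate reduces to showing
\[
\sum_{k\in\Z}\big(c^{(k)}(\xi)\big)^p \;\le\; C\,\rho_{\ast}(\xi)^{1-p}.
\]
One splits this sum at $k=-\ell$. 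For $k\ge-\ell$ one has $|A^{\ast k}\xi|$ bounded below by a constant, so $\min(1,|A^{\ast k}\xi|^{s+1})\le1$ and the tail is dominated by the geometric series $\sum_{k\ge-\ell}b^{k(p-1)}=C\,b^{\ell(1-p)}$, convergent since $0<p<1$. For $k<-\ell$ one uses $\min(1,|A^{\ast k}\xi|^{s+1})\le C\,b^{(k+\ell)\zeta_-(s+1)}$, and the substitution $i=-(k+\ell)>0$ turns the tail into $C\,b^{\ell(1-p)}\sum_{i\ge1}b^{-i[\zeta_-(s+1)p-(1-p)]}$, which converges \emph{precisely} because admissibility of the triple forces $(s+1)\zeta_->\tfrac1p-1$, i.e. $s+1>(\tfrac1p-1)\tfrac{\ln b}{\ln\lambda_-}$. (The case $p=1$ is separate and immediate, since $H_A^1\hookrightarrow L^1$ gives $\|\hat f\|_\infty\le C\|f\|_{H_A^1}$ and continuity of $\hat f$ is Riemann--Lebesgue.)

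Finally one passes from this bound to the statement. The series $g(\xi):=\sum_j\lambda_j\hat{a_j}(\xi)$ converges locally uniformly on $\R^n\setminus\{0\}$ to a continuous function with $|g(\xi)|\le C\|f\|_{H_A^p}\rho_{\ast}(\xi)^{1/p-1}$; since $1/p-1>0$, the right-hand side tends to $0$ as $\xi\to0$ (by \eqref{EQ1}), so $g$ extends continuously to all of $\R^n$ with $g(0)=0$. Now $\sum_j\lambda_j a_j\to f$ in $\mathcal{S}'$, hence $\sum_j\lambda_j\hat{a_j}\to\hat f$ in $\mathcal{S}'$; the uniform domination above, together with the at-most-polynomial growth of $\rho_{\ast}$ (again \eqref{EQ1}), lets dominated convergence identify this $\mathcal{S}'$-limit with $g$. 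Thus $\hat f=g$ as tempered distributions, so $\hat f$ is represented by a continuous function satisfying the stated inequality, and $\hat f(\xi)=\sum_j\lambda_j\hat{a_j}(\xi)$ both pointwise a.e. and in $\mathcal{S}'$. I expect the scale summation — in particular checking that the admissibility threshold on $s$ is exactly the borderline for convergence of the small-scale tail — to be the crux; the remaining steps are routine.
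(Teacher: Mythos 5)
Your proof is correct. The paper itself does not prove this statement --- it simply cites it as \cite[Theorem~1]{MR3043011} --- so there is no internal argument to compare against, but your reconstruction is sound and is essentially the expected argument. The two-sided atom estimate $|\hat a(\xi)|\le C\,b^{k(1-1/p)}\min\bigl(1,|A^{\ast k}\xi|^{s+1}\bigr)$ is derived correctly (rescaling to $B_0$, crude $L^1$ bound versus degree-$s$ Taylor cancellation), the use of $p$-subadditivity together with the trivial bound $\sum_{j:k_j=k}|\lambda_j|^p\le\sum_j|\lambda_j|^p$ to reduce to the scalar sum $\sum_k(c^{(k)}(\xi))^p$ is valid (the inequality $\sum_kS_kc_k\le S\sum_kc_k$ when $S_k\le S$ holds; one could also just use $\sup_k$), and you correctly identify that the small-scale tail converges exactly because $s\ge\lfloor(\tfrac1p-1)\tfrac{\ln b}{\ln\lambda_-}\rfloor$ forces $(s+1)\zeta_->\tfrac1p-1$. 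The passage from the uniform tail estimate to locally uniform convergence on $\R^n\setminus\{0\}$, the continuous extension to $0$ when $p<1$, and the dominated-convergence identification of the $\mathcal{S}'$-limit with $g$ are all in order, and you rightly observe that $p=1$ must be handled separately (the geometric series at $k\ge-\ell$ diverges there), via $H_A^1\hookrightarrow L^1$ and Riemann--Lebesgue. Two minor stylistic points: the remark that $|A^{\ast k}\xi|$ is bounded below for $k\ge-\ell$ is not needed, since $\min(1,\cdot)\le1$ always; and for $p=1$ the identity $\hat f=\sum_j\lambda_j\hat{a_j}$ still requires the same $\mathcal{S}'$/DCT step you give for $p<1$ (it is not a consequence of the $L^1$ embedding alone), though the uniform bound $|g_N|\le C\|f\|_{H_A^1}$ makes it immediate.
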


\begin{proof}[Proof of Lemma \ref{lem:UB-atom}] If $a$ is a $(p, \infty, s)$ atom, it is compactly supported, so that it is in $L^2 \cap H_A^p$, and $T_m a = (m \hat{a})^{\vee}$ is well-defined. Now let $f \in L^2 \cap H_A^p$, with an infinite atomic decomposition $f = \sum_j \lambda_j a_j$ using $(p, \infty, s)$ atoms. We first establish $T_m$ can pass through the infinite sum:
	\begin{align}\label{T:Passes}
	T_m f = T_m \big( \sum_j \lambda_j a_j \big) = \sum_j \lambda_j T_m a_j.
	\end{align}
Observe that passing the operator through the infinite sum is the main issue raised by \cite{MR2163588} and the rationale as to why the result of \cite{MR2399059} is needed for a general sublinear operator. In our case where our operator is a multiplier, we show that we can do this directly. If we denote the right-hand term above by $g = \sum_j \lambda_j T_m a_j$, then \eqref{T:Passes} holds if we can show $(T_m f)^{\wedge} = m\hat{f} = \hat{g}$ in $\mathcal{S}'$. To show \eqref{T:Passes}, we note that by Lemma \ref{lem:MBDW1}, for $\xi \in \R^n$ almost everywhere, we have $(m\hat{f}) (\xi) = \sum_j \lambda_j m(\xi) \hat{a_j} (\xi)$. Then
	\begin{align*}
	\hat{g}
		&= \big( \sum_j \lambda_j T_m a_j \big)^{\wedge} = \sum_j \lambda_j (T_m a_j)^{\wedge} = \sum_j \lambda_j (m \hat{a_j}) = m \sum_j \lambda_j \hat{a_j} = m \hat{f}.
	\end{align*}
Since $\hat{g} = m\hat{f} = (T_m f)^{\wedge}$ in $\mathcal{S}'$ and pointwise, we must also have $g = T_m f$, thus establishing the equality \eqref{T:Passes}. The boundedness of $T_m : L^2 \cap H_A^p \rightarrow H_A^p$ follows immediately:
	\[ \left\| T_m f \right\|_{H_A^p}^p = \| \sum_j \lambda_j T(a_j) \|_{H_A^p}^p \leq \sum_j |\lambda_j|^p \| T(a_j) \|_{H_A^p}^p \leq C \sum_j |\lambda_j|^p. \]
Taking the infimum over all possible atomic decompositions, we have $\| T_m f \|_{H_A^p} \leq C \| f \|_{H_A^p}$. Lastly, with $L^2 \cap H_A^p$ a dense subset of $H_A^p$, there exists a unique bounded extension $\tilde{T}_m : H_A^p \rightarrow H_A^p$ such that $\tilde{T}_m = T_m$ on $L^2 \cap H_A^p$.
\end{proof}

    \begin{proof}[Proof of Lemma \ref{lem:MB}]
    Let $p$ satisfy \eqref{SIO-prange} and $(p, q, s)$ is an admissible triple. Let $a$ be a $(p, q, s)$ atom supported on the ellipoid $x_0 + B_r$ for some $x_0 \in \R^n$ and $r \in \Z$. The boundedness of $T$ follows once we establish the uniform bound of the molecular norm $N(Ta) = \| Ta \|_q^{1 - \theta} \| \rho(x - x_0)^d Ta(x) \|_q^{\theta} \leq C$. Note that since $(\R^n, dx, \rho)$ is a space of homogeneous type, $T$ is bounded from $L^q$ to $L^q$ for $q > 1$. There is a $C$, depending only on $T$, $q$, and $\theta$, such that
        \[ \| T a \|_q^{1 - \theta} \leq C \| a \|_q^{1 - \theta} \leq C b^{r(\frac{1}{q} - \frac{1}{p})(1 - \theta)}. \]
By Minkowski's inequality:
         \begin{align*}
         \| \rho(x - x_0)^d Ta(x) \|_q  &\leq \big( \int_{x_0 + B_{r + 2\omega}} |\rho(x - x_0)^{dq} Ta(x)|^q dx \big)^{1/q} \\
                                        &+ \big( \int_{(x_0 + B_{r + 2\omega})^c} |\rho(x - x_0)^{dq} Ta(x)|^q dx \big)^{1/q} = I_1 + I_2
         \end{align*}

The estimate for $I_1$ is immediate:
        \begin{align*}
        I_1 \leq b^{d(r + 2\omega)} \left( \int_{x_0 + B_{r + 2\omega}} |Ta(x)|^q dx \right)^{1/q} \leq b^{d(r + 2\omega)} \| Ta \|_q \leq C b^{dr} b^{r(\frac{1}{q} - \frac{1}{p})} = C b^{r(d + \frac{1}{q} - \frac{1}{p})}.
        \end{align*}
To estimate $I_2$, we require the following pointwise estimate from \cite[Lemma 9.5]{MR1982689}: Suppose $T$ is a singular integral operator whose kernel $k$ is CZC-$R$, with $R$ satisfying \eqref{CZ-R}. Then there exists a constant $C$ such that for every $(p, q, s)$ atom $a$ with support $x_0 + B_r$, all $l \geq 0$ and $x \in x_0 + (B_{r + l + 2\omega + 1} \backslash B_{r + l + 2\omega})$,
    \begin{align*}
    |Ta(x)| \leq C b^{-l R \zeta_-  - l} |B_r|^{-1/p}.
    \end{align*}
    With this estimate, we have
        \begin{align*}
        I_2 &= \sum_{j = 0}^{\infty} \int_{x_o + (B_{r + 2\omega + j + 1} \backslash B_{r + 2\omega + j})} \rho(x - x_o)^{dq} |Ta(x)|^q dx \\
            &\leq C b^{-\frac{rq}{p}} \sum_{j = 0}^{\infty} b^{-jq (1 + R \zeta_-)} \left( \int_{x_o + B_{r + 2\omega + j + 1} \backslash B_{r + 2\omega + j}} \rho(x - x_o)^{dq} dx \right) \\
            &= C b^{-\frac{rq}{p}} \sum_{j = 0}^{\infty} b^{-jq (1 + R \zeta_-)} b^{(dq + 1)(r + 2\omega + j)} = C b^{r(dq + 1 - \frac{q}{p})} \sum_{j = 0}^{\infty} b^{j(dq + 1 - q(1 + R \zeta_-))}.
        \end{align*}
    The geometric series converges exactly when $R$ satisfies \eqref{SIO-prange}. Taking the power $\theta/q$ on both sides, we have
        \[ \| \rho(x - x_o)^d Ta(x) \|_q^{\theta} \leq C b^{r\theta(d + \frac{1}{q} - \frac{1}{p})}. \]
    All together, we have $N(Ta) \leq C b^{k(\frac{1}{q} - \frac{1}{p})(1 - \theta)} b^{k\theta (d + \frac{1}{q} - \frac{1}{p})} = C$, as the exponent is exactly 0.
    \end{proof}

\subsection{Proof of Theorem \ref{Thm:Molecular}} 

We need a few preliminary results on projections and molecules, which we state without proof as they are implicit in the proof of Lemma 9.3 of \cite{MR1982689}. To define the projections needed, recall that given a dilation $A$, $\{ B_j \}_{j \in \Z}$ denotes the `canonical' ellipsoids so that for all $j \in \Z$, $A (B_j) = B_{j + 1}$. We also define $\| f \|_{L^1 (B)} = \int_B |f(x)| dx$.

    \begin{definition} Let $s \in \N$ and $\mathcal{B} = \{ x + B_j : x \in \R^n, j \in \Z \}$. Define $P_s$ to be the space of polynomials on $\R^n$ of degree at most $s$.  If $B \in \mathcal{B}$, we define $\pi_B$ as the natural projection defined by the Riesz Lemma:
            \[ \int_{B} (\pi_B f(x)) Q(x) dx = \int_{B} f(x) Q(x) dx, \qquad \textrm{ for all } f \in L^1(B) \textrm{ and } Q \in P_s. \]
\end{definition}

With these projections, we make some elementary observations. Let $Q = \{ Q_{\alpha} \}_{|\alpha| \leq s}$ be an orthonormal basis of $P_s$ in $L^2(B_0)$-norm, that is, $\langle Q_{\alpha}, Q_{\beta} \rangle = \int_{B_0} Q_{\alpha} (x) \overline{Q_{\beta} (x)} dx = \delta_{\alpha, \beta}$. Then the projection $\pi_{B_0} : L^1(B_0) \rightarrow P_s$ is given by
         \begin{align*}
         \pi_{B_0} f = \sum_{|\alpha| \leq s} \left( \int_{B_0} f(x) \overline{Q_{\alpha} (x)} dx \right) Q_{\alpha}.
         \end{align*}
     Generally, if $j \in \Z$, then $\pi_{B_j} : L^1 (B_j) \rightarrow P_s$ is given by
        \begin{align}
        \label{Proj-j}
        \pi_{B_j} f = \left( D_A^{-j} \pi_{B_0} D_A^j \right) f.
        \end{align}
    If $B = y + B_j$, then $\pi_{B} : L^1 (B) \rightarrow P_s$ is given by
        $\pi_B f = \left( T_y \pi_{B_j} T_{-y} \right) f$,
        where $T_y f = f(x - y)$ is the translation operator, and there exists $C_0$, depending only on $s$ and $Q$, such that for all $B \in \mathcal{B}$, given $x \in B$,
        \begin{align}\label{Proj-UnifBound}
        |\pi_B f (x)| \leq C_0 \int_{B} |f| \ \frac{dx}{|B|}.
        \end{align}
    Let $\tilde{\pi}_B = \mathrm{Id} - \pi_B$ be the complementary projection. Then for all $B \in \mathcal{B}$, $\tilde{\pi}_B : L^q (B) \rightarrow L^q (B)$ is bounded, with
        \begin{align*}
        \| \tilde{\pi}_B (f) \|_{L^q (B)} \leq (1 + C_0) \| f \|_{L^q (B)}.
        \end{align*}
    Furthermore, for all $\alpha$ with $|\alpha| \leq s$, we have $ \int_{B} x^{\alpha} \cdot (\tilde{\pi}_B f) (x) dx = 0$.

\begin{lem}\label{lem:projM} Let $M$ be a $(p, q, d)$ molecule centered at $x_0$.
        \begin{enumerate}
        \item Then $\| \pi_j M \|_{L^1(B_j)} \rightarrow 0$ as $j \rightarrow \infty$.
        \item Define $g_j = (\tilde{\pi}_{B_j} M) \textbf{1}_{B_j} = (M - \pi_{B_j} M) \textbf{1}_{B_j}$. Then $g_j \rightarrow M$ in $L^1$ as $j \rightarrow \infty$.
        \end{enumerate}
\end{lem}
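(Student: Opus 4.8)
The plan is to use the decay built into the molecular norm $N(M)$ to control $\|\pi_{B_j}M\|_{L^1(B_j)}$ on the large ellipsoids $B_j$, and then deduce the $L^1$-convergence $g_j \to M$ from that decay together with the absolute integrability of $M$ on $\R^n$. For part (1), I would start from the uniform bound \eqref{Proj-UnifBound} on the projections, which gives $|\pi_{B_j}M(x)| \leq C_0 |B_j|^{-1}\int_{B_j}|M|$ for $x \in B_j$, hence $\|\pi_{B_j}M\|_{L^1(B_j)} \leq C_0 \int_{B_j}|M|\,dx = C_0\|M\|_{L^1(B_j)}$. So it suffices to show $\|M\|_{L^1(B_j)} \to 0$, but this is false in general — rather, what one needs is that a \emph{weighted} version goes to zero or, more carefully, one must use H\"older to split $\int_{B_j}|M|$ against the weight $\rho(x-x_0)^d$. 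Concretely: assume without loss of generality $x_0 = 0$; split $B_j = B_{j_0} \cup (B_j \setminus B_{j_0})$ for a fixed large $j_0$, and on $B_j\setminus B_{j_0}$ write $|M| = \rho(x)^{-d}\cdot \rho(x)^d|M|$ and apply H\"older with exponents $q$ and $q'$. The factor $\|\rho(x)^d M\|_{L^q}$ is finite (it appears in $N(M)$), and $\|\rho(x)^{-d}\|_{L^{q'}(B_j\setminus B_{j_0})}$ is finite and in fact tends to a finite limit precisely because $d$ satisfies \eqref{Mole-d}, which is exactly the condition making $\rho(x)^{-dq'}$ integrable near infinity (using $|B_k| = b^k$ and summing the geometric series $\sum_{k \geq j_0} b^{-dq'k}b^{k}$). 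Thus $\int_{B_j\setminus B_{j_0}}|M| \leq \varepsilon$ uniformly in $j$ once $j_0$ is large, while $\int_{B_{j_0}}|M| \leq \|M\|_{L^q}|B_{j_0}|^{1/q'} < \infty$ is a fixed number; hmm — this shows $\|M\|_{L^1(B_j)}$ is bounded, not that it vanishes. The correct statement uses instead that $\|\pi_{B_j}M\|_{L^1(B_j)} \leq C_0\|M\|_{L^1(B_j)}$ and the tail bound shows $\|M\|_{L^1(B_j\setminus B_{j_0})} \to 0$ as $j_0 \to \infty$ \emph{uniformly}, and then one uses the sharper bound $|\pi_{B_j}M(x)| \leq C_0|B_j|^{-1}\|M\|_{L^1(B_j)}$ combined with integrating over $B_j$: the point is that $\|\pi_{B_j}M\|_{L^1(B_j)} \leq C_0\|M\|_{L^1(B_j)}$, and applying H\"older to the whole of $B_j$ gives $\|M\|_{L^1(B_j)} \leq \|M\|_{L^q}(\text{measure of }B_j)^{1/q'} = \|M\|_q\, b^{j/q'}$, which grows — so instead I bound $\|\pi_{B_j}M\|_{L^1(B_j)}$ by interpolating against the weight: $\|\pi_{B_j}M\|_{L^1(B_j)} \leq C_0 b^j\cdot b^{-j}\int_{B_j}|M| = C_0\int_{B_j}|M|$, and then the $H\"older$-against-$\rho^{-d}$ argument with the \emph{full} ellipsoid $B_j$ gives $\int_{B_j}|M| \leq \|M\|_q(\int_{B_j}\rho^{-dq'})^{1/q'}$ plus the bounded piece near $0$, which is still finite but not decaying. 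I therefore expect the genuine argument to instead estimate $\|\pi_{B_j}M\|_{L^1(B_j)}$ by noting the projection kills nothing but its $L^1$-norm is controlled by $|B_j|^{-1}\big(\int_{B_j}|M|\big)\cdot|B_j| $; the decay must come from using that for $x\in B_j$ the moments $\int_{B_j}M x^\alpha\,dx$ of $M$ over the \emph{growing} ellipsoid approach $\int_{\R^n}Mx^\alpha\,dx = 0$ by the vanishing-moment condition (2) on $M$, and the rate is governed by the tail $\int_{B_j^c}|M(x)||x|^{|\alpha|}\,dx \to 0$, which the weight $\rho(x-x_0)^d$ controls via \eqref{Mole-d}. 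This is the honest mechanism, and it is the step I expect to be the main obstacle: carefully writing $\pi_{B_j}M = \sum_{|\alpha|\leq s}\big(\int_{B_j}M\overline{Q_\alpha^{(j)}}\big)Q_\alpha^{(j)}$ with the rescaled basis $Q_\alpha^{(j)} = D_A^{-j}Q_\alpha D_A^j$, bounding the rescaled $Q_\alpha^{(j)}$ on $B_j$ (growth like $b^{-j/2}$ times polynomial factors controlled by $\zeta_+$), and showing the coefficients $\int_{B_j}M\overline{Q_\alpha^{(j)}} = -\int_{B_j^c}M\overline{Q_\alpha^{(j)}}$ decay.

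For part (2), once (1) is in hand the argument is short. Write $M - g_j = M\mathbf{1}_{B_j^c} + (\pi_{B_j}M)\mathbf{1}_{B_j}$, so that $\|M - g_j\|_{L^1} \leq \|M\|_{L^1(B_j^c)} + \|\pi_{B_j}M\|_{L^1(B_j)}$. The second term tends to $0$ by part (1). For the first, I would again use H\"older against the weight: $\|M\|_{L^1(B_j^c)} \leq \|\rho(x-x_0)^dM\|_{L^q}\,\|\rho(x-x_0)^{-d}\|_{L^{q'}(B_j^c)}$, and $\|\rho(x-x_0)^{-d}\|_{L^{q'}(B_j^c)}^{q'} = \sum_{k \geq j}\int_{x_0 + (B_{k+1}\setminus B_k)}\rho(x-x_0)^{-dq'}\,dx \simeq \sum_{k\geq j}b^{-dq'k}b^{k}$, a geometric tail that converges to $0$ as $j\to\infty$ precisely because $d > (1-\tfrac1q)\cdot\tfrac{\ln b}{\ln b}$ — and here the exponent condition \eqref{Mole-d}, namely $d > s\frac{\ln\lambda_+}{\ln b} + 1 - \frac1q$, certainly forces $dq' > 1$ so the series sums. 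This establishes $g_j \to M$ in $L^1$. Note also that since $M \in L^q$ with compact-in-$\rho$ tails it is genuinely in $L^1(\R^n)$, which is what makes all of these manipulations legitimate; I would record that fact (a one-line H\"older estimate $\|M\|_{L^1} \leq \|M\|_q |B_{j_0}|^{1/q'} + \|\rho^d M\|_q\|\rho^{-d}\|_{L^{q'}(B_{j_0}^c)} < \infty$) at the very start of the proof.

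In summary: the skeleton is (i) $M \in L^1(\R^n)$ via H\"older and \eqref{Mole-d}; (ii) $\|\pi_{B_j}M\|_{L^1(B_j)} \to 0$ by expanding the projection in the $A^j$-rescaled orthonormal basis, turning the coefficient integrals into tail integrals over $B_j^c$ using the vanishing moments (2), and bounding those tails by the weighted norm in $N(M)$ together with the summable geometric series coming from $d$; (iii) deduce $g_j \to M$ in $L^1$ from (i), (ii), and the tail estimate $\|M\|_{L^1(B_j^c)}\to 0$. The main obstacle is step (ii): keeping track of the $b^{\pm j}$-scaling factors produced by $D_A^{\pm j}$ acting on the polynomials $Q_\alpha$ and the volume normalizations, and checking that the resulting exponent in the geometric series is strictly negative exactly under \eqref{Mole-d} — this is a bookkeeping computation of the same flavor as the $I_2$ estimate in the proof of Lemma \ref{lem:MB}, so I would model it on that argument.
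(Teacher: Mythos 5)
The paper itself states this lemma without proof (deferring to the argument implicit in Lemma 9.3 of \cite{MR1982689}), so the comparison is really with the computation the paper does carry out: the mechanism you finally settle on for part (1) --- vanishing moments of $M$ turning the projection coefficients $\int_{B_j} M\,\overline{Q_{\alpha}(A^{-j}u)}\,du$ into tail integrals over $B_j^c$, which are then controlled by H\"older against $\rho(\cdot)^d M$ and the geometric series governed by \eqref{Mole-d} --- is exactly the estimate of $\|\pi_{B_j}M\|_{L^q}$ in the paper's proof of Theorem \ref{Thm:Molecular}, and it does close the argument. Two adjustments would tighten your write-up. First, the string of discarded attempts (the bound $\|\pi_{B_j}M\|_{L^1(B_j)}\le C_0\|M\|_{L^1(B_j)}$, which as you note only gives boundedness) should simply be deleted; only the moment-cancellation route is needed. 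Second, your parenthetical that the rescaled basis grows like $b^{-j/2}$ is not right, and in fact things are easier than you feared: for $x\in B_j$ one has $A^{-j}x\in B_0$, so $|Q_{\alpha}(A^{-j}x)|\le C(Q)$ uniformly, and the normalization $b^{-j}$ in \eqref{Proj-j} cancels against $|B_j|=b^j$, giving $\|\pi_{B_j}M\|_{L^1(B_j)}\le C\sum_{|\alpha|\le s}\bigl|\int_{B_j}M\,\overline{Q_{\alpha}(A^{-j}u)}\,du\bigr|$. On $B_j^c$ one bounds $|Q_{\alpha}(A^{-j}u)|\le C|A^{-j}u|^{s}\le C b^{-js\zeta_+}\rho(u)^{s\zeta_+}$ and applies H\"older to get $\bigl|\int_{B_j}M\,\overline{Q_{\alpha}(A^{-j}u)}\,du\bigr|\le C\|\rho(\cdot)^d M\|_q\, b^{j(1-\frac1q-d)}$, the tail series converging precisely because $q'(s\zeta_+-d)<-1$ by \eqref{Mole-d}; since $d>1-\frac1q$, this tends to $0$, which is part (1). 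With that inserted, your part (2) --- $\|M-g_j\|_{1}\le\|M\|_{L^1(B_j^c)}+\|\pi_{B_j}M\|_{L^1(B_j)}$, the first term handled by H\"older against $\rho^{-d}$ and $dq'>1$ --- is correct as written, and your opening observation that $M\in L^1(\R^n)$ is worth keeping as the justification for the $L^1$ manipulations.
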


\begin{proof}[Proof of Theorem \ref{Thm:Molecular}] We first prove estimate \eqref{est:Molecular}. Let $M$ be a $(p, q, d)$ molecule. Without loss of generality, we assume $N(M) = \| M \|_{q}^{1 - \theta} \| M(u) \rho(u)^d \|_{q}^{\theta} = 1$. Define the quantity $\sigma$ by $\| M \|_q = \sigma^{\frac{1}{q} - \frac{1}{p}}$ and and choose $k \in \Z$ such that $b^{k} \leq \sigma < b^{k + 1}$. From lemma \ref{lem:projM}, we have the following expression for $M$, with convergence in $L^1$:
        \[ M = g_k + \sum_{j = k}^{\infty} (g_{j + 1} - g_j). \]
    Note that for each $j$, $g_j$ has vanishing moments of order up to $s$, and has compact support. We will decompose $M$ by setting $g_k = \mu_k a_k$ and $g_{j + 1} - g_j = \mu_j a_j$, where $(\mu_j)_{j = k}^{\infty} \in \ell^p$ has a uniform norm independent of $M$ and $(a_j)_{j = k}^{\infty}$ is a sequence of  $(p, q, s)$ atoms. We start with $g_k = (M - \pi_k M) \textbf{1}_{B_k}$. With $C_0$ as in \ref{Proj-UnifBound}, we have
        \[ \| g_k \|_{L^q(B_k)} \leq \| M \|_{L^q (B_k)} + \| \pi_k M \|_{L^q(B_k)} \leq (1 + C_0) \| M\|_{L^q(B_k)}. \]
    Scaling the measure, we obtain
        \[ \| g_k \|_{L^q \left( \frac{\chi_{B_k}}{|B_k|} dx \right)} = \left( \int_{B_k} |g_k (x)|^q\frac{dx}{|B_k|} \right)^{1/q} \leq (1 + C_0) \| M\|_{L^q(\frac{dx}{|B_k|})}. \]
     Note that because $\frac{1}{q} - \frac{1}{p} < 0$, we have $\sigma \geq b^k \Rightarrow \sigma^{\frac{1}{q} - \frac{1}{p}} \leq b^{k(\frac{1}{q} - \frac{1}{p})}$. Continuing our estimate using the definition of $\sigma$, we have
        \begin{align*}
        \| M \|_{L^q(\frac{dx}{|B_k|})}     &= |B_k|^{-1/q} \| M \|_q = |B_k|^{-1/q} \sigma^{\frac{1}{q} - \frac{1}{p}}
                                            \leq |B_k|^{-\frac{1}{q}} |B_k|^{\frac{1}{q} - \frac{1}{p}} = |B_k|^{\frac{1}{p}}.
        \end{align*}
Therefore we have $\| g_k \|_{q} \leq (1 + C_0) |B_k|^{\frac{1}{q} - \frac{1}{p}}$, which gives $g_k = \mu_k a_k$ where $a_k$ is a $(p, q, s)$ atom and $\mu_k = 1 + C_0$. For $j > k$, we have
        \[ g_{j + 1} - g_j = \textbf{1}_{B_{j + 1} \backslash B_j} M - \pi_{B_{j + 1}} M + \pi_{B_j} M. \]
Estimating the first term, we have
        \begin{align*}
        \| M \textbf{1}_{B_{j + 1} \backslash B_j} \|_{L^q(\frac{dx}{|B_{j + 1}|})}
            = |B_{j + 1}|^{-\frac{1}{q}} b^{-jd} \left( \int_{B_{j + 1} \backslash B_j} |M(x)|^q \rho(x)^{dq} dx \right)^{1/q} \\
            \leq b^d |B_{j + 1}|^{-\frac{1}{q}} |B_{j + 1}|^{-d} \| M(x) \rho(x)^d \|_q = b^d |B_{j + 1}|^{-\frac{1}{q} - d} \| M \|_q^{\frac{\theta - 1}{\theta}} \leq b^d |B_{j + 1}|^{-\frac{1}{p}} b^{(k - j) d(1 - \theta)}.
        \end{align*}
    Setting $r = d(1 - \theta) > 0$, we obtain the estimate $\| M \textbf{1}_{B_{j + 1} \backslash B_j} \|_q \leq b^d |B_{j + 1}|^{\frac{1}{q} - \frac{1}{p}} b^{(j - k)(-r)}$. Next, we estimate $\pi_{B_j} M$ with Minkowski's inequality:
        \begin{align*}
        \| \pi_{B_j} M \|_{L^q (B_j)}
            &= \bigg\| \sum_{|\alpha| \leq s} \left( \int_{B_j} M(u) \overline{Q_{\alpha} (A^{-j} u)} \frac{du}{b^j} \right) Q_{\alpha} (A^{-j} x) \bigg\|_{L^q (B_j)} \\
            &\leq \sum_{|\alpha| \leq s} b^{-j} \left| \int_{B_j} M(u) \overline{Q_{\alpha} (A^{-j}u)} du \right| \| D_A^{-j} Q_{\alpha} \|_{L^q (B_j)}.
        \end{align*}
    Let $C(Q)$ a uniform bound for $\| Q_{\alpha} \|_{L^q (B_0)}$. By a change of variables, we have $\| D_{A}^{-j} Q_{\alpha} \|_{L^q(B_j)} = b^{\frac{j}{q}} \| Q_{\alpha} \|_{L^q (B_0)} \leq C(Q) b^{\frac{j}{q}}$. Next, since $M$ has vanishing moments, and $\frac{1}{q} + \frac{1}{q'} = 1$,
        \begin{align*}
        \left| \int_{B_j} M(u) \overline{Q_{\alpha} (A^{-j}u)} du \right|
            &= \left| \int_{B_j^c} M(u) \overline{Q_{\alpha} (A^{-j}u)} du \right| \leq \int_{B_j^c} |M(u)| |Q_{\alpha} (A^{-j} u)| du \\
            &\leq C(Q) \int_{B_j^c} |M(u)| |A^{-j} u|^s du \leq C(Q) c_A \int_{B_j^c} |M(u)| \rho(A^{-j} u)^{s \zeta_+} du \\
            &\leq C(Q) c_A b^{-j s \zeta_+} \left( \int_{B_j^c} |M(u)|^q \rho(u)^{dq} du \right)^{1/q} \left( \int_{B_j^c} \rho(u)^{q' (s \zeta_+ - d)} du \right)^{1/q'}.
        \end{align*}
The first integral in the last expression can be computed as follows:
         \begin{align*}
         \left( \int_{B_j^c} |M(u)|^q \rho(u)^{dq} du \right)^{1/q}
            &\leq \| M(x) \rho(x)^d \|_q = \| M \|_q^{\frac{\theta - 1}{\theta}}
            = \sigma^{\left( \frac{1}{q} - \frac{1}{p} \right)\left(\frac{\theta - 1}{\theta}\right)} = \sigma^{d(1 - \theta)} \leq  b^{kr}.
         \end{align*}
    The second integral from Holder's inequality can be computated directly as a geometric series. With   with $C$ a constant depending only on $A, q, s$, and $d$, and $d > s\zeta_+ 1 - \frac{1}{q}$, we have
        \begin{align*}
        \int_{B_j^c} \rho(u)^{q' (s \zeta_+ - d)} du
            &= \sum_{m = j}^{\infty} \int_{B_{m + 1} \backslash B_m} \rho(u)^{q' (s\zeta_+ - d)} du
            = C b^{j(1 + q'(s\zeta_+ - d))}.
        \end{align*}
    This gives
        \[ \left| \int_{B_j} M(u) \overline{Q_{\alpha} (A^{-j} u)} du \right| \leq C b^{-js\zeta_+} b^{kr} b^{j(1 - \frac{1}{q} + s\zeta_+ - d)} = C b^{kr} b^{j(1 - \frac{1}{q} - d)}. \]
    Then we have the following estimate on $\| \pi_{B_j} M \|_{L^q}$,
        \begin{align*}
        \| \pi_{B_j} M \|_{L^q}     &\leq C b^{-j} b^{kr} b^{j(1 - \frac{1}{q} - d)} b^{\frac{j}{q}} = C b^{-jd} b^{-k (-r)} = C b^{j(\frac{1}{q} - \frac{1}{p})} b^{(j - k)(-r)}.
        \end{align*}
    Finally, returning to the estimate on $g_{j + 1} - g_j$, we have
        \begin{align*}
        \| g_{j + 1} - g_j \|_q &\leq \| M \textbf{1}_{B_{j + 1} \backslash B_j} \|_q + \| \pi_{B_{j + 1}} M \|_q + \| \pi_{B_j} M \|_q \leq C |B_{j + 1}|^{\frac{1}{q} - \frac{1}{p}} b^{(j - k)(-r)}.
        \end{align*}
    Therefore if $j > k$, $g_{j + 1} - g_j = \mu_j a_j$, with $\mu_j = C b^{(j - k)(-a)}$ and where $a_j$ is a $(p, q, s)$ atom supported on $B_{j + 1}$. Summing the coefficients, we have
        \[ \sum_{j = k}^{\infty} |\mu_j|^p = \mu_k + \sum_{j = 1}^{\infty} C^p b^{-jrp} = (1 + C_0) + \frac{C}{1 - b^{-rp}}. \]
    This establishes \eqref{est:Molecular} with $C$ depending only on $A, p, q, s, d$ and the cube $Q$, and is independent of $M$.

    Lastly, we prove the molecular decomposition. If $f \in H_A^p$, then its atomic decomposition $\sum_j \lambda_j a_j$ can be seen as a molecular decomposition with $M_j = \lambda_j a_j$. Then by \eqref{est:Molecular}, we have
    \begin{align*}
    \| f \|_{H_A^p}^p &\leq \sum_j \| \lambda_j a_j \|_{H_A^p}^p \leq C \sum_j N(\lambda_j a_j)^p \leq C' \sum_j \lambda_j^p < \infty,
    \end{align*}
    where in the penultimate inequality, we used the fact that the molecular norm of atoms are uniformly bounded.

    As for the converse, suppose $f \in S'$ has the molecular decomposition $f = \sum_j M_j$ with $\sum_j N(M_j)^p < \infty$. Then again by \eqref{est:Molecular}, we have
        \[ \| f \|_{H_A^p} = \| \sum_j M_j \|_{H_A^p}^p \leq \sum_j \| M_j \|_{H_A^p}^p \leq C \sum_j N(M_j)^p < \infty. \]
    So $f \in H_A^p$, and this completes our proof.

    \end{proof}

\renewcommand{\bibname}{\textsc{references}} 
\bibliographystyle{amsplain} 			\bibliography{Cite}

\end{document}